\documentclass[12pt,reqno]{amsart}
\usepackage{amssymb}
\usepackage{mathrsfs,mathtools}
\usepackage{palatino}
\usepackage{bbm}
\usepackage{pictex}
\usepackage{amscd}
\usepackage{latexsym}
\usepackage{amssymb}
\usepackage{enumitem}
\usepackage{eucal}
\usepackage{MnSymbol}
\usepackage{eufrak}
\usepackage{verbatim}
\usepackage{bbm}
\usepackage{hyperref}
\hypersetup{colorlinks,linkcolor=blue,urlcolor=cyan,citecolor=red}
\usepackage{pictex}
\usepackage{amscd}
\usepackage{latexsym}
\usepackage{amssymb}
\usepackage{eucal}
\usepackage{eufrak}
\usepackage{verbatim}

\newcommand{\p}{\partial}

\newcommand{\FF}{\mathbb{F}}

\newcommand{\NN}{\mathbb{N}}

\newcommand{\ZZ}{\mathbb{Z}}

\newcommand{\cO}{\mathcal{O}}

\newcommand{\cZ}{\mathcal{Z}}

\newcommand{\fg}{\mathfrak{g}}

\newcommand{\kk}{\mathbbm{k}}

\newcommand{\fsl}{\mathfrak{sl}}

\newcommand{\Wl}{W_{\ell}}

\DeclareMathOperator{\Char}{char}

\DeclareMathOperator{\End}{End}

\DeclareMathOperator{\Ker}{Ker}

\numberwithin{equation}{section}
\newtheorem{Theorem}{Theorem}[section]
\newtheorem{Lemma}[Theorem]{Lemma}

\newtheorem{Proposition}[Theorem]{Proposition}

\theoremstyle{Theorem}

\newtheorem*{thm*}{Theorem}
\newtheorem*{thm**}{Corollary}
\newtheorem*{thm***}{Theorem B}
\theoremstyle{remark}

\numberwithin{equation}{section}

\begin{document}
\title[Truncated current Witt algebra]{Simple modules over truncated current Lie algebras of the Witt algebra}
\author[H. Chang, R. Hou \lowercase{and} J. Hu]{Hao Chang, Ruiying Hou\lowercase{and} Jinxin Hu}
\address[H. Chang]{School of Mathematics and Statistics,
Central China Normal University, Wuhan 430079, China}
\email{chang@ccnu.edu.cn}
\address[R. Hou]{School of Mathematics and Statistics,
Central China Normal University, Wuhan 430079, China}
\email{houry1998@163.com}
\address[J. Hu]{Department of Mathematics,
Soochow University, Suzhou 215506, China}
\email{20244007004@stu.suda.edu.cn}
\date{\today}
\subjclass[2020]{Primary 17B50}
%%%%%%%%%%%%%%%%%%%%%%%%%%%%%%%%%%%%%%%%%%%%%%%%%%%%%%%%%%%%%%
\begin{abstract}
Let $\kk$ be an algebraically closed field of characteristic $p>3$,
and let $W$ denote the $p$-dimensional Witt algebra--the
first example of a non-classical simple Lie algebra.
For a non‑negative integer $\ell$,
consider the associated truncated current Lie algebra
$W_\ell=W\otimes\kk[t]/(t^{\ell+1})$.
In this paper,
we first study simple $W_\ell$-modules having $p$-character $\chi$ of height at most one,
and provide a complete classification of such modules up to isomorphism. 
We then investigate a family of simple $W_\ell$-modules
whose $p$-characters have height greater than one.
\end{abstract}
\maketitle
%%%%%%%%%%%%%%%%%%%%%%%%%%%%%%%%%%%%%%%%%%%%%%%%%%%%%%%%%%%%%%
%%%%%%%%%%%%%%%%%%%%%%%%%%%%%%%%%%%%%%%%%%%%%%%%%%%%%%%%%%%%%%
%%%%%%%%%%%%%%%%%%%%%%%%%%%%%%%%%%%%%%%%%%%%%%%%%%%%%%%%%%%%%%
\section{Introduction}
Let $\fg$ be a complex semisimple Lie algebra and $\ell$ be a nonnegative integer.
The {\it truncated current algebra} is the Lie algebra 
\[
    \fg_\ell:=\fg\otimes \mathbb{C}[t]/(t^{\ell+1}).
\]
Truncated current Lie algebras have appeared in numerous parts of the literature in
recent years, and
the representation theory of such algebras has been intensively studied recently.
The first truncated current Lie algebra $\fg_1$ appeared in the work of Takiff \cite{Takiff71}, and so they are often referred to as {\it Takiff Lie algebras}.
In \cite{MS19},
Mazorchuck and Sörderberg introduced a version of category $\mathcal{O} $ for Takiff $\fsl_2$.
Later, Chaffe \cite{Ch23} made a thorough study of category $\cO$ for all Takiff Lie algebras.
These results were generalized by Chaffe and Topley \cite{CT24}.
They studied the category $\cO$ for the more general current Lie algebras $\fg_\ell$.
Let $\kk$ be an algebraically closed field of positive characteristic.
Recently, Chaffe and Topley \cite{CT23} developed the representation theory of 
truncated current Lie algebras $\fg_\ell$ over $\kk$ in the case where $\fg$ is the Lie algebra of a standard reductive group.

In this paper, we initiate a study of modular representations of the {\it truncated current Witt algebra} $\Wl$ over $\kk$.
By definition, it is the Lie algebra over the truncated current algebra:
\[
    \Wl:=W\otimes \kk[t]/(t^{\ell+1}),
\]
where $W$ is the $p$-dimensional {\it Witt algebra}.
As is well known,
any finite dimensional simple Lie algebra over an algebraically closed field of characteristic $p>5$ is either classical or of Cartan type (cf. \cite{Strade04}).
Among the later, the Witt algebra $W$ is the simplest.
It was found by Witt as the first example of non-classical simple Lie algebra in the 1930’s.
The irreducible $W$-modules were determined by Chang in \cite{Zhang41}.
Strade \cite{Strade77} gave proofs many of Chang's results in a different approach.
Feldvoss and Nakano \cite{FN98} exploited further the representation and cohomology theory for the Witt algebra $W$.
Shu \cite{Shu98} determined the irreducible modules for the Zassenhaus algebra.
As $\Wl$ is restricted,
every simple $\Wl$-module admits a {$p$-character} $\chi\in\Wl^*$ (see \cite[Chapter 5, Theorem 2.5]{SF88}).
We only have to consider the simple modules for the {\it reduced enveloping algebra} $U_\chi(\Wl)$.
Following \cite{Strade77}, 
we classify the simple $\Wl$-modules according to the {\it height} of their $p$-characters.
Precisely
speaking, 
we first give a complete classification of the isomorphism classes of simple $\Wl$-modules with $p$-characters of height at most one.
By employing the main theorem of \cite{Strade77}, 
we also study a class of simple $\Wl$-modules with $p$-characters of height greater than one.
We hope our results may offer some insight
into the general picture of the representation theory for  
truncated current algebras associated to other restricted Lie algebras of Cartan type.

We organize this article in the following manner.
In Section \ref{section name -1},
we recall some results about the Witt algebra and introduce the $\chi$-reduced Verma modules for $\Wl$.
It is shown that each simple $\Wl$-module is the homomorphism image of some $\chi$-reduced Verma module under the assumption that the height $r(\chi)\leq 1$.
Section \ref{section name-2} is devoted to the study of simple $\Wl$-modules having $p$-character $\chi$ of height at most one.
We give a complete classification of the isomorphism classes of simple $\Wl$-modules (Theorem \ref{Theorem: height 0 iso classes}, Theorem \ref{thm:height 1 iso 1}, Theorem \ref{thm:height 1 iso 2}).
In Section \ref{section name-3}, we further study a class of simple $U_\chi(\Wl)$-modules with $r(\chi)>1$.
In this case, $U_\chi(\Wl)$ has only one simple module (up to isomorphism).
We constructed the simple $U_\chi(\Wl)$-module and determine its dimension (Theorem \ref{thm:height greater than one}).

\bigskip
\emph{Throughout this paper, $\kk$ denotes an algebraically closed field of characteristic $\Char(\kk)=:p>3$.}
%%%%%%%%%%%%%%%%%%%%%%%%%%%%%%%%%%%%%%%%%%%%%%%%%%%%%%%%%%%%%%
%%%%%%%%%%%%%%%%%%%%%%%%%%%%%%%%%%%%%%%%%%%%%%%%%%%%%%%%%%%%%%
%%%%%%%%%%%%%%%%%%%%%%%%%%%%%%%%%%%%%%%%%%%%%%%%%%%%%%%%%%%%%%
\section{Preliminaries}\label{section name -1}
%%%%%%%%%%%%%%%%%%%%%%%%%%%%%%%%%%%%%%%%%%%%%%%%%%%%%%%%%%%%%%
%%%%%%%%%%%%%%%%%%%%%%%%%%%%%%%%%%%%%%%%%%%%%%%%%%%%%%%%%%%%%%
%%%%%%%%%%%%%%%%%%%%%%%%%%%%%%%%%%%%%%%%%%%%%%%%%%%%%%%%%%%%%%
\subsection{Witt algebra}
Let $A(1):=\kk[X]/(X^p)$ be the truncated polynomial algebra in one indeterminate and let $x$ denote the class of $X$ in $A(1)$.
The {\it Witt algebra} $W$ is defined as the Lie algebra of derivations of $A(1)$.
We have a canonical basis $\{e_i;~-1\leq i\leq p-2\}$ for $W$,
where $e_i=x^{i+1}\p$ and $\p$ is the usual differential operator satisfying $\p(x^i)=ix^{i-1}$.
The Lie bracket in $W$ is given by 
\[
[e_i,e_j]=(j-i)e_{i+j},~\quad -1\leq i,j\leq p-2,
\]
with $e_{i+j}:=0$ if $i+j\notin\{-1,\dots, p-2\}$ by definition.
Thus, we have a gradation
\begin{align}\label{Z-grading of W}
W=\bigoplus\limits_{i=-1}^{p-2}W_{[i]},   
\end{align}
where $W(1)_{[i]}=\kk e_i$.
We set $W_{(i)}:=\oplus_{j\geq i}W_{[j]}$.
Note also that $W$ is a {\it restricted} Lie algebra,
the $p$-map being given by raising to the $p$th power using the associative product in $W\subseteq\End(A(1))$.
In particular, $e_i^{[p]}=\delta_{i,0}e_i$.
%%%%%%%%%%%%%%%%%%%%%%%%%%%%%%%%%%%%%%%%%%%%%%%%%%%%%%%%%%%%%%
%%%%%%%%%%%%%%%%%%%%%%%%%%%%%%%%%%%%%%%%%%%%%%%%%%%%%%%%%%%%%%
%%%%%%%%%%%%%%%%%%%%%%%%%%%%%%%%%%%%%%%%%%%%%%%%%%%%%%%%%%%%%%
\subsection{Simple $W$-modules}\label{subsection: simple W1modules}
For $\chi\in W^*$,
let $U_\chi(W)$ be the corresponding {\it reduced enveloping algebra} (cf. \cite[Section 2.7]{Jantzen98}).
Strade \cite{Strade77} defines
the {\it height} $r(\chi)$ of $\chi$ by
\[
r(\chi):=\min\{i;~-1\leq i\leq p-2~{\rm and}~\chi(W_{(i)})=(0)\}. 
\]
The simple $W$-modules were determined by Zhang in \cite{Zhang41}.
We describe here very briefly the simple $W$-modules with a $p$-character $\chi$ of height $-1,0,1$.
In all these cases, $\chi$ vanishes on the $p$-unipotent radical $W_{(1)}$ of $W_{(0)}$.
In particular, $W_{(1)}$ acts trivially on every simple $U_\chi(W_{(0)})$-module.
Hence the isomorphism classes of simple $U_\chi(W_{(0)})$-modules are in one-to-one correspondence with the {\it weights}
\begin{align}\label{Lambda chi}
\Lambda(\chi):=\{\lambda\in\kk;~\lambda^p-\lambda=\chi(e_0)^p\}.
\end{align}
If $r(\chi)\leq 0$, then $\chi(W_{(0)})=(0)$.
It follows that $\Lambda(\chi)$ coincides with the prime field $\FF_p$ of $\kk$.
For each $\lambda\in\Lambda(\chi)$ consider the
{\it $\chi$-reduced Verma module} 
\[
Z_\chi(\lambda):=U_\chi(W)\otimes_{U_\chi(W_{(0)})}\kk_\lambda 
\]
where $\kk_\lambda:=\kk.v_\lambda$ denotes the one-dimensional $U_\chi(W_{(0)})$-module via
\[
e_0.v_\lambda=\lambda v_\lambda,~\quad e_i.v_\lambda=0~\forall 1\leq i\leq p-2.
\]
The following Theorem is due to Zhang (\cite[Hauptsatz 2']{Zhang41}, see also \cite[Theorem A]{FN98}).
\begin{Theorem}\label{Thm:Zhangherui results}
Let $\chi\in W^*$ with $r(\chi)\leq 1$.
\begin{enumerate}
\item[(1)] If $r(\chi)=-1$, then there are $p$ isomorphism classes of simple $U_\chi(W)$-modules.
These modules are represented by the one-dimensional trivial $W$-module $\kk$, the $(p-1)$-dimensional module $Z_\chi(p-1)/{\rm Soc}_{U_\chi(W)}Z_\chi(p-1)$
and the $p$-dimensional modules $Z_\chi(\lambda)$ for $\lambda\in\{1,2,\dots,p-2\}$.
\item[(2)] If $r(\chi)=0$, then there are $p-1$ isomorphism classes of simple $U_\chi(W)$-modules each of dimension $p$ and represented by $Z_\chi(\lambda)$ for $\lambda\in\{0,1,\dots,p-2\}$.
\item[(3)] If $r(\chi)=1$, then then there are $p$ isomorphism classes of simple $U_\chi(W)$-modules
each of dimension $p$ and represented by $Z_\chi(\lambda)$ for $\lambda\in\Lambda(\chi)$.
\end{enumerate}
\end{Theorem}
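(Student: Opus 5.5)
The plan is to reduce everything to the $\chi$-reduced Verma modules $Z_\chi(\lambda)$, which have dimension $p$. Every simple $U_\chi(W)$-module is a quotient of some $Z_\chi(\lambda)$, and I would then analyse when $Z_\chi(\lambda)$ is simple.

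\textbf{Step 1: simple modules are quotients of Verma modules.} Let $M$ be a simple $U_\chi(W)$-module. The subalgebra $W_{(1)}$ is $p$-unipotent, and $\chi(W_{(1)})=0$ because $r(\chi)\le 1$. Hence $M^{W_{(1)}}\neq 0$. This space is stable under $e_0$, so it contains an $e_0$-eigenvector $v$. Its eigenvalue $\lambda$ satisfies $\lambda^p-\lambda=\chi(e_0)^p$, because $e_0^p-e_0^{[p]}=e_0^p-e_0$ acts by $\chi(e_0)^p$. Frobenius reciprocity then gives a surjection $Z_\chi(\lambda)\to M$. It follows that the simple modules are exactly the simple heads of the $Z_\chi(\lambda)$ with $\lambda\in\Lambda(\chi)$.

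\textbf{Step 2: the basis and the singular-vector analysis.} $Z_\chi(\lambda)$ has basis $e_{-1}^k\otimes v_\lambda$ for $0\le k\le p-1$. Here $e_{-1}^p$ acts by $\chi(e_{-1})^p$, and $e_0$ acts on $e_{-1}^k\otimes v_\lambda$ by $\lambda-k$. Let $N$ be a nonzero proper submodule. Since $W_{(1)}$ is unipotent on $N$, $N$ contains a nonzero $W_{(1)}$-invariant vector. Applying $e_{-1}$ and using weights, I would compute $e_1\cdot e_{-1}^k v$ and $e_2\cdot e_{-1}^k v$ explicitly. From $[e_1,e_{-1}]=-2e_0$ one gets
\[
e_1 e_{-1}^k v=-\bigl(2k\lambda-k(k-1)\bigr)e_{-1}^{k-1}v+(\text{terms from }\chi(e_1)).
\]
A similar formula, involving $\chi(e_1)$, holds for $e_2$. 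The goal is to show that no nonzero $W_{(1)}$-invariant vector other than multiples of $v$ exists, except in the exceptional cases. Any such vector would generate $Z_\chi(\lambda)$, which is a contradiction.

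\textbf{Step 3: the case analysis.}

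(3) If $r(\chi)=1$, then $\chi(e_0)\ne0$, so $\lambda\notin\FF_p$. The coefficients $2k\lambda-k(k-1)$ then never vanish for $1\le k\le p-1$. Every $Z_\chi(\lambda)$ is therefore simple. They are pairwise non-isomorphic, since the $W_{(1)}$-invariants are one-dimensional with $e_0$-weight $\lambda$. This gives $p$ classes.

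(2) If $r(\chi)=0$, then $\chi(e_{-1})\ne0$, so $e_{-1}$ acts invertibly. The weights are $\lambda\in\FF_p$. I would show that $Z_\chi(\lambda)$ is simple and that $Z_\chi(p-1)\cong Z_\chi(0)$. The vector $e_{-1}^{p-1}v_{p-1}$ is then a $W_{(1)}$-invariant of weight $0$; the coefficient $2k\lambda-k(k-1)$ vanishes at $k=2\lambda+1$, and with $e_2$ this forces $\lambda=p-1$. This leaves $p-1$ classes.

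(1) If $\chi=0$, the invariant $e_{-1}^{p-1}v_{p-1}$ spans a submodule only for $\lambda=p-1$, namely the socle. For $\lambda=0$, the invariant $e_{-1}v_0$ generates a submodule of codimension $1$, giving the trivial module as head. The remaining simple head is $Z_0(p-1)/\mathrm{Soc}$ of dimension $p-1$. A dimension/weight check shows it is isomorphic to the head-complement arising from $Z_0(0)$. Together with the $p-2$ simple Vermas this gives $p$ classes.

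The main obstacle is the explicit computation of the $e_1,e_2$ actions on $e_{-1}^k v$. In particular one must confirm that in case (2), $\chi(e_{-1})\neq0$ genuinely removes all singular vectors except the identification $Z_\chi(p-1)\cong Z_\chi(0)$. Alternatively, one can cite Zhang \cite{Zhang41} and \cite[Theorem A]{FN98} directly for these computations.
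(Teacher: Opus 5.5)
The paper gives no proof of this theorem. It quotes it from Zhang \cite{Zhang41} (see also \cite[Theorem A]{FN98}) and uses it only as the $\ell=0$ input for the $\Wl$ results. Your proposal instead reconstructs the standard self-contained argument, and it is sound. Its ingredients are these:
\begin{enumerate}
\item[(a)] Every simple module is the head of some $Z_\chi(\lambda)$. Each $Z_\chi(\lambda)$ has a simple head, because its weight-$\lambda$ space is the line $\kk(1\otimes v_\lambda)$, which generates it.
\item[(b)] $e_0$ acts diagonally on the basis $e_{-1}^k\otimes v_\lambda$ with the $p$ distinct weights $\lambda-k$. Hence submodules and homomorphisms $Z_\chi(\mu)\to Z_\chi(\lambda)$ are governed by $W_{(1)}$-invariant basis vectors.
\item[(c)] $W_{(1)}$ is generated by $e_1,e_2$, since $p>3$.
\end{enumerate}
This route has the merit of being the same mechanism the paper later runs for $\Wl$: weight separation by $e_0$, lowering by the $e_1t^j$, and the singular vectors $(e_{-1}t^\ell)^{[\lambda-\mu]}\otimes v_\lambda$. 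The citation buys brevity and Zhang's complete classification, which also covers heights $>1$.

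The computation you single out as the obstacle does go through, but several details need correcting.

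\textbf{The formulas.} Since $r(\chi)\le1$ forces $\chi(e_1)=\chi(e_2)=0$, there are no terms coming from $\chi(e_1)$. For $0\le k\le p-1$ one has exactly
\[
e_1e_{-1}^k\otimes v_\lambda=-k(2\lambda-k+1)\,e_{-1}^{k-1}\otimes v_\lambda,\qquad e_2e_{-1}^k\otimes v_\lambda=k(k-1)(3\lambda-k+2)\,e_{-1}^{k-2}\otimes v_\lambda .
\]
For $\lambda\notin\FF_p$ the first coefficient never vanishes when $k\ge1$, which settles (3).

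\textbf{Singular vectors for $\lambda\in\FF_p$.} The invariant basis vectors other than $1\otimes v_\lambda$ are exactly $e_{-1}\otimes v_0$ (weight $p-1$) and $e_{-1}^{p-1}\otimes v_{p-1}$ (weight $0$). Your claim that ``$e_2$ forces $\lambda=p-1$'' holds only for $k\ge2$. The $k=1$ solution gives the same identification $Z_\chi(p-1)\cong Z_\chi(0)$ in the other direction, so the count in (2) stands.

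\textbf{Case (2).} The hypothesis $\chi(e_{-1})\neq0$ does not remove these singular vectors; it makes $e_{-1}$ invertible. A nonzero submodule is a sum of $e_0$-weight spaces, so it contains some $e_{-1}^k\otimes v_\lambda$. Then $e_{-1}^{p-k}$ sends this vector to $\chi(e_{-1})^p(1\otimes v_\lambda)$. So simplicity is immediate, and the singular vectors yield isomorphisms rather than submodules.

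\textbf{Case (1).} What must actually be checked is that the radical of $Z_0(p-1)$ equals its socle $\kk\,e_{-1}^{p-1}\otimes v_{p-1}$, i.e.\ that the quotient is simple. This follows from
\[
e_1e_{-1}^k\otimes v_{p-1}=k(k+1)\,e_{-1}^{k-1}\otimes v_{p-1}\neq0\quad\text{for }1\le k\le p-2 .
\]
The comparison with the radical of $Z_0(0)$ is correct but not needed. The $p$ heads are already distinguished by dimension ($1$, $p-1$, $p$). Among the $Z_0(\lambda)$ with $1\le\lambda\le p-2$, they are distinguished by the weight $\lambda$ of the unique invariant line.
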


%%%%%%%%%%%%%%%%%%%%%%%%%%%%%%%%%%%%%%%%%%%%%%%%%%%%%%%%%%%%%%
\subsection{Truncated current Lie algebras}
We fix a non-negative integer $\ell$.
The {\it truncated current Lie algebra} associated to $W$ is defined via
\[
 W_\ell:=W\otimes\kk[t]/(t^{\ell+1}).    
\]
We make the notation $x\otimes t^i=xt^i$.
Then the elements $e_it^j$ with $i=-1,0,\dots,p-2$ and $0\leq j\leq \ell$ form a basis of $W_\ell$.
In particular, $\dim\Wl=(\ell+1)p$.
The bracket relation and $p$-mapping are given by 
\[
[e_it^k,e_jt^l]=(j-i)e_{i+j}t^{k+l},\quad (e_it^k)^{[p]}=\delta_{i,0}e_it^{kp}.
\]
It follows that $\ZZ$-grading of $W$ \eqref{Z-grading of W} induces a $\ZZ$-grading of $\Wl$:
\[
W_\ell=\bigoplus\limits_{i=-1}^{p-2}W_{\ell,[i]},      
\]
where $W_{\ell,[i]}=\kk e_i\otimes\kk[t]/(t^{\ell+1})$.
We consider the associated descending filtration 
\[
\Wl=W_{\ell,(-1)}\supseteq W_{\ell,(0)}\supseteq\cdots\supseteq W_{\ell,(p-2)}\supseteq (0)
\]
defined via $W_{\ell,(i)}:=\sum_{j\geq i}W_{\ell,[j]}$.   
%%%%%%%%%%%%%%%%%%%%%%%%%%%%%%%%%%%%%%%%%%%%%%%%%%%%%%%%%%%%%%
\subsection{$\chi$-reduced Verma modules}
Similar to Section \ref{subsection: simple W1modules},
the {\it height} $r(\chi)$ of a $p$-character $\chi\in\Wl^*$ is defined as the smallest integer $i$ with $\chi(W_{\ell,(i)})=(0)$.
Observe that $r(\chi)=-1$ if and only if $\chi=0$.

Now suppose that $\chi\in\Wl^*$ satisfies $\chi(W_{\ell,(1)})=(0)$,
i.e., that the height of $\chi$ is at most equal to $1$.
We can make any $U_\chi(W_{\ell,[0]})$-module $M$ into a $U_\chi(W_{\ell,(0)})$-module letting any $x\in W_{\ell,(1)}$ act as $0$ on $M$.
We can then construct the induced $U_\chi(\Wl)$-module
\[
\cZ_\chi(M):=U_{\chi}(\Wl)\otimes_{U_\chi(W_{\ell,(0)})}M.
\]
Since $W_{\ell,[0]}$ is abelian,
it follows that the irreducible $U_\chi(W_{\ell,[0]})$-module is one-dimensional.
Observe that $W_{\ell,[0]}=\kk e_0\oplus \sum_{i=1}^n\kk e_0t^i$ and $\sum_{i=1}^n\kk e_0t^i$ is a unipotent subalgebra of $W_{\ell,[0]}$.
Therefore the isomorphism classes of one-dimensional $U_\chi(W_{\ell,[0]})$-modules still are parameterized by the set $\Lambda(\chi)$ \eqref{Lambda chi},
that is,
there is for any $\lambda\in\Lambda(\chi)$ a simple $U_\chi(W_{\ell,[0]})$-module $\kk_\lambda$ generated by an element $v_\lambda$ such that $e_0.v_\lambda=\lambda v_\lambda$,
and the action of $e_0t^i.v_\lambda$ with $1\leq i\leq \ell$ is completely determined by the $p$-character $\chi$ (see \cite[Section 3.3]{Jantzen98}).
We will let $e_0t^i.v_\lambda=\chi_iv_\lambda$.
In particular, we have $\chi_\ell=\chi(e_0t^{\ell})$.

On the other hand,
since $W_{\ell,(1)}$ is a unipotent ideal of $W_{\ell,(0)}$
and the Lie algebra $W_{\ell,(0)}/W_{\ell,(1)}\cong W_{\ell,[0]}$ is abelian,
we see that the simple $U_\chi(W_{\ell,(0)})$-modules are the $\kk_\lambda$ with $\lambda\in\Lambda(\chi)$.
We shall use the simplified notation
\begin{align}\label{z chi lambda}
\cZ_\chi(\lambda):=\cZ_\chi(\kk_\lambda),
\end{align}
and call it the {\it $\chi$-reduced Verma module} for $\Wl$.

Let $M$ be a simple $U_\chi(\Wl)$-module.
Then $M$ contains a simple $U_\chi(W_{\ell,(0)})$-module,
say $\kk_\lambda$.
Hence, by Frobenius reciprocity, we have a non-zero homomorphism
\[
  \cZ_\chi(\lambda)=U_{\chi}(\Wl)\otimes_{U_\chi(W_{\ell,(0)})}\kk_\lambda\twoheadrightarrow M.
\]
This shows:
\begin{Proposition}\label{prop: simpe module is image of zchilambda}
 Let $\chi\in\Wl^*$ with $r(\chi)\leq 1$.
 Then each simple $U_\chi(\Wl)$-module is the homomorphic image of some $\cZ_\chi(\lambda)$ with $\lambda\in\Lambda(\chi)$.
\end{Proposition}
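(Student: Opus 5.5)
The argument is a Frobenius reciprocity argument, and the main work is to justify that a simple $U_\chi(\Wl)$-module contains a one-dimensional $U_\chi(W_{\ell,(0)})$-submodule of the form $\kk_\lambda$. Let $M$ be a simple $U_\chi(\Wl)$-module. Since $U_\chi(\Wl)$ is finite dimensional (of dimension $p^{(\ell+1)p}$), $M$ is finite dimensional. The $p$-character of $M$ is $\chi$, so restricting to the restricted subalgebra $W_{\ell,(0)}$ makes $M$ a $U_\chi(W_{\ell,(0)})$-module, where $\chi$ is also used for its restriction. Because $M$ is finite dimensional and nonzero, it contains a simple $U_\chi(W_{\ell,(0)})$-submodule $S$.

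\textbf{Step 1: $S\cong\kk_\lambda$ for some $\lambda\in\Lambda(\chi)$.} I would argue as follows.
\begin{enumerate}
\item[(a)] $W_{\ell,(1)}$ is an ideal of $W_{\ell,(0)}$, since $[e_it^k,e_jt^l]\in W_{\ell,[i+j]}$ and $i+j\geq 1$ whenever $i\geq 0$ and $j\geq 1$.
\item[(b)] $W_{\ell,(1)}$ is $p$-nilpotent, since $(e_it^k)^{[p]}=0$ for $i\geq1$. Moreover $\chi(W_{\ell,(1)})=0$ because $r(\chi)\leq 1$.
\item[(c)] By (a) and (b), the subspace $S^{W_{\ell,(1)}}$ of $W_{\ell,(1)}$-invariants is nonzero. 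This is Engel's theorem in the form: a unipotent restricted Lie algebra with zero $p$-character acts nilpotently, so it has nonzero fixed points on any nonzero finite-dimensional module.
\item[(d)] Since $W_{\ell,(1)}$ is an ideal, $S^{W_{\ell,(1)}}$ is a $W_{\ell,(0)}$-submodule. By simplicity of $S$ it equals $S$, so $W_{\ell,(1)}$ acts by zero on $S$.
\item[(e)] Hence $S$ is a simple module over $U_\chi(W_{\ell,(0)}/W_{\ell,(1)})\cong U_\chi(W_{\ell,[0]})$. This algebra is commutative and $\kk$ is algebraically closed, so $S$ is one-dimensional.
\item[(f)] Write $S=\kk v$ with $e_0.v=\lambda v$. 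The $p$-character condition $e_0^p-e_0^{[p]}=\chi(e_0)^p$ gives $\lambda^p-\lambda=\chi(e_0)^p$, so $\lambda\in\Lambda(\chi)$.
\item[(g)] The action of $e_0t^i$ on $v$ is forced, as recalled before the statement (cf. \cite[Section 3.3]{Jantzen98}). So $S\cong\kk_\lambda$.
\end{enumerate}

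\textbf{Step 2: Frobenius reciprocity.} The inclusion $\kk_\lambda\hookrightarrow M$ is a $U_\chi(W_{\ell,(0)})$-homomorphism. By the adjunction
\[
\operatorname{Hom}_{U_\chi(\Wl)}(\cZ_\chi(\lambda),M)\cong\operatorname{Hom}_{U_\chi(W_{\ell,(0)})}(\kk_\lambda,M),
\]
it corresponds to a nonzero $U_\chi(\Wl)$-homomorphism $\cZ_\chi(\lambda)\to M$, namely $u\otimes v_\lambda\mapsto u.v$. Its image is a nonzero submodule of the simple module $M$, so the map is surjective. This proves the claim.

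The only step needing care is (c), the existence of $W_{\ell,(1)}$-invariants. I would prove it directly. Each $x\in W_{\ell,(1)}$ satisfies $x^p=x^{[p]}+\chi(x)^p=0$ in $U_\chi$, so $x$ acts nilpotently on $S$. Since $W_{\ell,(1)}$ acts on $S$ by a Lie algebra of nilpotent endomorphisms, Engel's theorem gives a common null vector. Everything else is formal.
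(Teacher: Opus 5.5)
Your proof is correct and follows the paper's argument: take a simple $U_\chi(W_{\ell,(0)})$-submodule of $M$, identify it as some $\kk_\lambda$ (the paper does this by noting that $W_{\ell,(1)}$ is a unipotent ideal with abelian quotient $W_{\ell,[0]}$), and apply Frobenius reciprocity; you just spell out the Engel-type step the paper leaves implicit. One small fix: in (b) you only check $(e_it^k)^{[p]}=0$ on basis vectors, but $x^{[p]}=0$ for \emph{all} $x\in W_{\ell,(1)}$ (which your step (c) uses) follows from Jacobson's formula, because Lie words of length $p$ in $W_{\ell,(1)}$ lie in $W_{\ell,(p)}=0$.
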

%%%%%%%%%%%%%%%%%%%%%%%%%%%%%%%%%%%%%%%%%%%%%%%%%%%%%%%%%%%%%%
%%%%%%%%%%%%%%%%%%%%%%%%%%%%%%%%%%%%%%%%%%%%%%%%%%%%%%%%%%%%%%
%%%%%%%%%%%%%%%%%%%%%%%%%%%%%%%%%%%%%%%%%%%%%%%%%%%%%%%%%%%%%%
\section{Irreducible representations of $\Wl$ with $p$-characters of height no more than one}\label{section name-2}
In this section,
we study the simple $U_\chi(\Wl)$-modules with $r(\chi)\leq 1$.
To ease notation we put
$\Wl^{[i]}=\Wl \otimes t^i\subseteq \Wl$
and note that $\Wl=\oplus_{i=0}^{\ell}\Wl^{[i]}$ is a Lie algebra grading.
We also use notation $\Wl^{(i)}:=\sum_{j\geq i}\Wl^{[j]}$.
We record the following useful result.

\begin{Lemma}\cite[Proposition 4.1]{CT23}\label{Lemma: reduction by truncation degree}
Let $\chi\in\Wl^*$ with $\chi(W_{\ell}^{(k+1)})$ for some $0\leq k\leq \ell$,
and let $\psi:=\chi|_{W_k}\in W_k^*$,
where we identify $W_k$ with $\oplus_{i=0}^{k}\Wl^{[i]}\subseteq \Wl$ as vector spaces.
Then the full subcategory of $U_\chi(\Wl)$-{\rm mod} whose objects are the modules annihilated by $\Wl^{(k+1)}$ is equivalent to $U_\psi(W_k)$-{\rm mod}.
This full subcategory contains all simple $U_\chi(\Wl)$-modules.
\end{Lemma}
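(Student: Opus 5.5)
The plan has three parts: show that $\Wl^{(k+1)}$ annihilates every simple $U_\chi(\Wl)$-module, identify the category of annihilated modules with $U_\psi(W_k)$-mod, and then deduce the statement about simples. (The hypothesis is read as $\chi(\Wl^{(k+1)})=0$.)

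\textbf{Step 1: the ideal $I:=\Wl^{(k+1)}$ is $p$-nilpotent.} Since $\Wl=\oplus_i\Wl^{[i]}$ is a Lie algebra grading, $I$ is an ideal of $\Wl$. For $y\in I$, write $y=\sum_{j\ge k+1}y_jt^j$. I will check that $\mathrm{ad}(y)$ raises $t$-degree by at least $k+1\ge 1$, so $\mathrm{ad}(y)^{\ell+1}=0$. For the $p$-map, the formula $(e_it^j)^{[p]}=\delta_{i,0}e_0t^{jp}$ together with Jacobson's formula shows that $y^{[p]}$ lies in $t$-degree $\ge p(k+1)$. Iterating, $y^{[p]^N}=0$ for $N$ large. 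Thus $I$ is a $p$-ideal consisting of $p$-nilpotent elements.

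\textbf{Step 2: simples are annihilated by $I$.} Let $M$ be a simple $U_\chi(\Wl)$-module. Because $\chi|_I=0$, $M$ is a restricted $U_0(I)$-module. Since $I$ is unipotent, $U_0(I)$ is local with augmentation ideal nilpotent, so $M^I:=\{m\in M;\ I.m=0\}\neq 0$ (Engel/Jacobson, cf.\ \cite[Section 3.3]{Jantzen98}). As $I$ is an ideal, $M^I$ is a $\Wl$-submodule: for $x\in\Wl$, $y\in I$ and $m\in M^I$ we have $y.x.m=[y,x].m+x.y.m=0$. By simplicity, $M=M^I$.

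\textbf{Step 3: the equivalence.} The modules annihilated by $I$ are exactly the modules over $U_\chi(\Wl)/U_\chi(\Wl)I$. I will identify this quotient with $U_{\bar\chi}(\Wl/I)$, where $\bar\chi$ is the form induced by $\chi$, which is well defined because $\chi(I)=0$. The identification holds because the defining relations $x^p-x^{[p]}-\chi(x)^p$ descend: $(x+I)^{[p]}=x^{[p]}+I$, as $I$ is a $p$-ideal. Next, $\Wl/I\cong W\otimes\kk[t]/(t^{k+1})=W_k$ as restricted Lie algebras via $e_it^j\mapsto e_it^j$ for $j\le k$. Under this isomorphism $\bar\chi$ corresponds to $\psi=\chi|_{W_k}$, using the vector space identification of $W_k$ with $\oplus_{i\le k}\Wl^{[i]}$. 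This gives the equivalence with $U_\psi(W_k)$-mod, and Step 2 shows the subcategory contains all simples.

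\textbf{Main obstacle.} The only delicate point is the $p$-nilpotence in Step 1. Jacobson's formula introduces commutator terms, and I must verify that these also stay in high $t$-degree. They do, since each commutator of elements of $I$ has $t$-degree $\ge 2(k+1)$, and any term with $t$-degree above $\ell$ vanishes.
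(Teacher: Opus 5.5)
Your proof is correct. The paper gives no argument for this lemma: it quotes Proposition 4.1 of Chaffe--Topley, with the hypothesis meant as $\chi(W_\ell^{(k+1)})=(0)$, which is how you read it. Your argument is the standard self-contained proof of that cited result: $I=W_\ell^{(k+1)}$ is a $p$-ideal of $p$-nilpotent elements, so it has nonzero fixed points on, and hence annihilates, every simple $U_\chi(W_\ell)$-module; and $U_\chi(W_\ell)/U_\chi(W_\ell)I\cong U_{\bar\chi}(W_\ell/I)\cong U_\psi(W_k)$, because $(x\otimes a)^{[p]}=x^{[p]}\otimes a^p$ makes $W_\ell\to W_k$ a surjection of restricted Lie algebras with kernel $I$.
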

%%%%%%%%%%%%%%%%%%%%%%%%%%%%%%%%%%%%%%%%%%%%%%%%%%%%%%%%%%%%%%
\subsection{Height $-1$}
Recall that we have $\chi=0$ in this case.
Note that when $\ell=0$,
$W_\ell=W$ is the Witt algebra in our notation.
By Lemma \ref{Lemma: reduction by truncation degree},
it suffices to consider simple $U_0(W)$-modules.
Next, using Theorem \ref{Thm:Zhangherui results}(1),
one can obtain the classification of the simple $U_0(\Wl)$-modules.
%%%%%%%%%%%%%%%%%%%%%%%%%%%%%%%%%%%%%%%%%%%%%%%%%%%%%%%%%%%%%%
\subsection{Height $0$}
Given a $p$-character $\chi\in\Wl^*$ of height $0$,
we have $\chi(W_{\ell,(0)})=(0)$ and $\chi(W_{\ell,[-1]})\neq (0)$.
To classify simple $U_\chi(\Wl)$-modules,
we may assume that $\chi(e_{-1}t^{\ell})\neq 0$.
Since otherwise $\chi(\Wl^{[\ell]})=(0)$,
Lemma \ref{Lemma: reduction by truncation degree} implies that 
the problem can be reduced to classifying simple
$U_\chi(W_{\ell-1})$-modules.

\begin{Proposition}\label{prop: rchi=0,zchilambda simple}
Let $\chi\in\Wl^*$ with $r(\chi)=0$.
Then the $\chi$-reduced Verma module $\cZ_\chi(\lambda)$ is simple for all $\lambda\in\Lambda(\chi)$.
\end{Proposition}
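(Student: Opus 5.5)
The plan is to show that every nonzero submodule of $\cZ_\chi(\lambda)$ contains the generator $1\otimes v_\lambda$; this suffices because $1\otimes v_\lambda$ generates $\cZ_\chi(\lambda)$. As explained above, we may assume $\chi(e_{-1}t^{\ell})\neq 0$, since otherwise Lemma \ref{Lemma: reduction by truncation degree} reduces us to $W_{\ell-1}$. Put $c_j:=\chi(e_{-1}t^j)$, so $c_\ell\neq 0$, and $u_j:=e_{-1}t^j-c_j\in U_\chi(\Wl)$ for $0\leq j\leq \ell$.

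First I set up coordinates. Since $W_{\ell,[-1]}$ is abelian and $(e_{-1}t^j)^{[p]}=0$, in $U_\chi(\Wl)$ we have $(e_{-1}t^j)^p=c_j^p$. Hence $u_j^p=0$, and $U_\chi(W_{\ell,[-1]})$ is the truncated polynomial algebra $\kk[u_0,\dots,u_\ell]/(u_j^p)$, which is local. By the PBW theorem, $\cZ_\chi(\lambda)$ is free of rank one over this algebra, with basis the monomials $u^a\otimes v_\lambda$, $0\leq a_j\leq p-1$. In particular any $e_{-1}t^j=u_j+c_j$ with $c_j\neq 0$ (above all $e_{-1}t^\ell$) acts invertibly.

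Next I compute the action of $e_0t^k$ for $1\leq k\leq\ell$. Since $\chi$ vanishes on $W_{\ell,(0)}$ and $e_0t^k$ is $p$-nilpotent for $k\geq 1$, it acts nilpotently on the one-dimensional module $\kk_\lambda$, so $\chi_k=0$ and $e_0t^k.v_\lambda=0$. From $[e_0t^k,e_{-1}t^j]=-e_{-1}t^{j+k}$, the operator $e_0t^k$ acts on $\cZ_\chi(\lambda)\cong\kk[u]/(u_j^p)$ as the derivation
\[
D_k=-\sum_{i=0}^{\ell-k}(u_{i+k}+c_{i+k})\,\frac{\p}{\p u_i}.
\]
This system is triangular. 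We have $D_\ell=-(u_\ell+c_\ell)\p_{u_0}$, so $\p_{u_0}$ is obtained by composing with the inverse of multiplication by $u_\ell+c_\ell$, which lies in $U_\chi(W_{\ell,[-1]})$. Next, $D_{\ell-1}=-(u_{\ell-1}+c_{\ell-1})\p_{u_0}-(u_\ell+c_\ell)\p_{u_1}$ yields $\p_{u_1}$. Continuing by induction, every $\p_{u_i}$ is realised as the action of an element of $U_\chi(\Wl)$.

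Finally, let $0\neq w$ lie in a submodule $N$. Applying suitable $\p_{u_i}$ to the monomial of $w$ of highest total degree lowers degrees while keeping $w$ nonzero, so we reach a nonzero scalar multiple of $1\otimes v_\lambda$ inside $N$. Therefore $N=\cZ_\chi(\lambda)$, and this argument does not depend on $\lambda$. The main point to check carefully is the derivation formula for $D_k$, in particular that no correction term arises from $e_0t^k$ acting on $v_\lambda$; this is exactly where $\chi_k=0$, i.e. the height-$0$ hypothesis, is used. The invertibility of $u_\ell+c_\ell$ is what makes the triangular inversion work.
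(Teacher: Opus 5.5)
Your identification $\cZ_\chi(\lambda)\cong\kk[u_0,\dots,u_\ell]/(u_0^p,\dots,u_\ell^p)$, the formula for $D_k$ and the triangular inversion are correct. There is, however, one concrete gap.

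\textbf{The gap.} None of $D_1,\dots,D_\ell$ involves $\partial_{u_\ell}$: in $D_k$ the index $i$ runs only up to $\ell-k\leq\ell-1$. So the inversion yields $\partial_{u_0},\dots,\partial_{u_{\ell-1}}$ and never $\partial_{u_\ell}$. Your claim that \emph{every} $\partial_{u_i}$ is realised is therefore not justified, and the final descent breaks down. For instance, every operator built from multiplications by the $u_j$ and from $\partial_{u_0},\dots,\partial_{u_{\ell-1}}$ maps $u_\ell^{p-1}\otimes v_\lambda$ into the ideal generated by $u_\ell^{p-1}$, so you can never get back to $1\otimes v_\lambda$. (For $\ell=0$ you have no operators at all.)

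\textbf{The fix.} The missing step is $k=0$. Since $[e_0,e_{-1}t^j]=-e_{-1}t^j$, the element $e_0$ acts as $\lambda+D_0$ with $D_0=-\sum_{i=0}^{\ell}(u_i+c_i)\partial_{u_i}$, whose $\partial_{u_\ell}$-coefficient $-(u_\ell+c_\ell)$ is invertible. Thus, if $P_i\in U_\chi(\Wl)$ realises $\partial_{u_i}$ for $i<\ell$, then $\partial_{u_\ell}$ is the action of $-(e_{-1}t^\ell)^{-1}\bigl(e_0-\lambda+\sum_{i<\ell}e_{-1}t^iP_i\bigr)$. With this added the rest goes through. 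If $u^a$ has maximal total degree among the monomials of $w$ with nonzero coefficient $w_a$, then $\partial^a w=(\prod_i a_i!)\,w_a\,(1\otimes v_\lambda)\neq0$, because $\partial^a u^b=0$ unless $b\geq a$ componentwise. The scalar $\lambda$ is as harmless as a nonzero $\chi_k$ would be. What drives the argument is $c_\ell\neq0$, not $\chi_k=0$.

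\textbf{Comparison with the paper.} Once repaired, your proof is a cleaner packaging of the paper's. The paper works with the basis $x_i=e_{-1}t^i+b_ie_{-1}t^\ell$ ($i<\ell$) of $\Ker(\chi|_{W_{\ell,[-1]}})$ together with $e_{-1}t^\ell$.
\begin{enumerate}
\item It multiplies by the nilpotent $x_i$ to land in $x^\tau\kk[e_{-1}t^\ell]\otimes v_\lambda$.
\item It separates the powers of $e_{-1}t^\ell$ by $e_0$-weights (the Vandermonde step) and uses invertibility of $e_{-1}t^\ell$ to reach $x^\tau\otimes v_\lambda$. This is exactly where $e_0$ does the job of your missing $\partial_{u_\ell}$.
\item It lowers exponents with $(e_{-1}t^\ell)^{p-1}e_0t^{\ell-m}$, which on the vectors it is applied to is a nonzero multiple of $\partial_{x_m}$. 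Clearing $x_0$ completely before touching $x_1$, and so on, avoids the off-diagonal terms that you remove by triangular inversion.
\end{enumerate}
Your coordinates make the mechanism transparent: all multiplications and all partial derivatives act, which forces simplicity. They also cover the case $\chi(e_{-1}t^\ell)\neq0$ of Proposition \ref{prop: rchi=1,simple} verbatim.

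\textbf{A caveat you share with the paper.} ``We may assume $\chi(e_{-1}t^\ell)\neq0$'' is really an extra hypothesis, not a reduction. If $\ell\geq1$ and $\chi(e_{-1}t^\ell)=0$, then $W\otimes t^\ell$ is an abelian ideal on which both $\chi$ and the $p$-map vanish. So it acts nilpotently and annihilates every simple $U_\chi(\Wl)$-module, whereas $e_{-1}t^\ell\otimes v_\lambda\neq0$. Hence $\cZ_\chi(\lambda)$ is not simple in that case.
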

\begin{proof}
For each $0\leq i\leq \ell-1$,
we put $b_i:=-\chi(e_{-1}t^i)/\chi(e_{-1}t^{\ell})$ as well as $x_i:=e_{-1}t^i+b_ie_{-1}t^{\ell}$.
Then the set $\{x_i;~0\leq i\leq \ell-1\}$ is a basis of $\Ker(\chi|_{W_{\ell,[-1]}})$.
It follows that
\[
W_{\ell,[-1]}=\Ker(\chi|_{W_{\ell,[-1]}})\oplus\kk e_{-1}.
\]
Then PBW theorem implies that $\cZ_\chi(\lambda)$ has basis
\[
\{x_0^{i_0}x_1^{i_1}\cdots x_{\ell-1}^{i_{\ell-1}}(e_{-1}t^{\ell})^{i_{\ell}}\otimes v_\lambda;~0\leq i_j<p~{\rm for~all}~j\}.
\]
Let $N$ be an arbitrary nonzero submodule of $\cZ_\chi(\lambda)$.
It suffices to show that $N=\cZ_\chi(\lambda)$.
Choose a nonzero element $v\in N$.
Since $x_i^{[p]}=0$ and $\chi(x_i)=0$ for all $0\leq i\leq \ell-1$,
we may thus assume that 
\[
v=\sum\limits_{j=0}^{p-1}a_jx_0^{p-1}x_1^{p-1}\cdots x_{\ell-1}^{p-1}(e_{-1}t^{\ell})^j\otimes v_\lambda,
\]
where $a_j\in\kk$ and not all zero.
For short, we will denote by $x^{\tau}:=x_0^{p-1}x_1^{p-1}\cdots x_{\ell-1}^{p-1}$.
For each $0\leq j\leq p-1$,
we have 
\[
e_0.x^{\tau}(e_{-1}t^{\ell})^j\otimes v_\lambda=(\lambda-\ell(p-1)-j)x^\tau(e_{-1}t^{\ell})^j\otimes v_\lambda.
\]
Therefore we obtain that
\[
\left( \begin{matrix}
	1&		1&		\cdots&		1\\
	\gamma&		\gamma -1&		\cdots&		\gamma -\left( p-1 \right)\\
	\vdots&		\vdots&		\ddots&		\vdots\\
	\gamma ^{p-1}&		\left( \gamma -1 \right) ^{p-1}&		\cdots&		\left( \gamma -\left( p-1 \right) \right) ^{p-1}\\
\end{matrix} \right) \left( \begin{array}{c}
	a_0x^\tau\otimes v_{\lambda}\\
	a_1x^\tau\left( e_{-1}t^{\ell} \right) \otimes v_{\lambda}\\
	\vdots\\
	a_{p-1}x^\tau\left( e_{-1}t^{\ell}\right)^{p-1}\otimes v_{\lambda}\\
\end{array} \right) 
=\left( \begin{array}{c}
	v\\
	e_0.v\\
	\vdots\\
	e_{0}^{p-1}.v\\
\end{array} \right) ,   
\]
where $\gamma=\lambda-\ell(p-1)$.
This shows that $x^\tau(e_{-1}t^{\ell})^{j_0}\otimes v_\lambda\in N$ for some $0\leq j_0\leq p-1$.
Consider the action of $e_{-1}t^\ell$ on $x^\tau(e_{-1}t^{\ell})^{j_0}\otimes v_\lambda$.
Since $\chi(e_{-1}t^\ell)\neq 0$,
we have $x^\tau\otimes v_\lambda\in N$,
so that 
\[
e_0t^\ell.x^\tau\otimes v_\lambda-\chi(e_0t^\ell)x^\tau\otimes v_\lambda\in N.    
\]
Moreover, we have
\begin{align*}
\left( e_{-1}t^{\ell} \right)^{p-1}. \left(e_0t^\ell.x^\tau\otimes v_\lambda-\chi(e_0t^\ell)x^\tau\otimes v_\lambda\right)=&\left( e_{-1}t^{\ell} \right)^{p-1}.\left[e_0t^{\ell},x^{\tau} \right] \otimes v_{\lambda}
\\
=\left( e_{-1}t^{\ell} \right) ^{p-1}\left[ e_0t^{\ell},x_{0}^{p-1} \right] x_{1}^{p-1}\cdots &x_{\ell-1}^{p-1}\otimes v_{\lambda}
=-(p-1)(e_{-1} t^{\ell})^p x_0^{p-2}x_{1}^{p-1}\cdots x_{\ell-1}^{p-1}\otimes v_{\lambda}\\
=-\left( p-1 \right) \chi & \left(e_{-1}t^{\ell}\right)^p x_{0}^{p-2}x_{1}^{p-1}\cdots x_{\ell-1}^{p-1}\otimes v_{\lambda}\ne 0.
\end{align*}
This implies $x_{0}^{p-2}x_{1}^{p-1}\cdots x_{\ell-1}^{p-1}
\otimes v_{\lambda}\in N$.
Repeating the above process, 
we conclude that $x_{1}^{p-1}\cdots x_{\ell-1}^{p-1}
\otimes v_{\lambda}\in N$.

We then proceed with similar steps, replacing the element $e_0t^{\ell}$ in the above process successively with $e_0t^{\ell-1}, e_0t^{\ell-2}, \dots, e_0t$, and finally obtain $1 \otimes v_\lambda \in N$, which implies $N = Z_\chi(\lambda)$.
\end{proof}

We have the following classification theorem on isomorphism classes of irreducible $U_\chi(\Wl)$-modules.
We will assume $\ell>0$,
because $W_0=W$ and the isomorphism classes of irreducible $U_\chi(W)$-modules were determined by Zhang (Theorem \ref{Thm:Zhangherui results}(2)).

\begin{Theorem}\label{Theorem: height 0 iso classes}
Let $\chi\in\Wl^*$ with $r(\chi)=0$.
Then we have $\cZ_\chi(\lambda)\cong\cZ_\chi(\mu)$ for any $\lambda,\mu\in\Lambda(\chi)$.
In particular,
$U_\chi(\Wl)$ has only one simple module up to isomorphism.
\end{Theorem}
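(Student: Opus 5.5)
The plan is to use Proposition~\ref{prop: rchi=0,zchilambda simple} together with Frobenius reciprocity. Fix $\lambda\in\Lambda(\chi)=\FF_p$. I will show that the simple module $M:=\cZ_\chi(\lambda)$ contains, for \emph{every} $\mu\in\FF_p$, a nonzero vector $w_\mu$ with the following properties:
\begin{itemize}
\item[(i)] $e_0.w_\mu=\mu w_\mu$;
\item[(ii)] $W_{\ell,(1)}.w_\mu=0$;
\item[(iii)] $e_0t^i.w_\mu=0$ for $1\le i\le \ell$.
\end{itemize}
Since $r(\chi)=0$ we have $\chi(W_{\ell,(0)})=0$, so all $\chi_i=0$. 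Hence $\kk w_\mu\cong\kk_\mu$ as a $U_\chi(W_{\ell,(0)})$-module, and Frobenius reciprocity gives a nonzero homomorphism $\cZ_\chi(\mu)\to\cZ_\chi(\lambda)$. Both modules are simple by Proposition~\ref{prop: rchi=0,zchilambda simple}, so this map is an isomorphism. As in the discussion before Proposition~\ref{prop: rchi=0,zchilambda simple}, we may assume $\chi(e_{-1}t^\ell)\neq0$, since otherwise Lemma~\ref{Lemma: reduction by truncation degree} reduces us to smaller $\ell$ or to Theorem~\ref{Thm:Zhangherui results}(2). The "in particular" then follows from Proposition~\ref{prop: simpe module is image of zchilambda}.

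\emph{Step 1: a starting vector.} Put $\mathfrak u:=W_{\ell,(1)}\oplus\sum_{i\ge1}\kk e_0t^i$. This is an ideal of $W_{\ell,(0)}$ consisting of $p$-nilpotent elements, and $\chi(\mathfrak u)=0$. Hence the fixed space $M^{\mathfrak u}$ is nonzero. It is stable under $e_0$, and $e_0$ acts on it semisimply with eigenvalues in $\FF_p$, because $e_0^p-e_0$ acts by $\chi(e_0)^p=0$. Pick an $e_0$-eigenvector $v\in M^{\mathfrak u}$, of weight $\mu_0$ say.

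\emph{Step 2: moving the weight with $y:=e_{-1}t^\ell$.} Set $w_{\mu_0-j}:=y^jv$ for $0\le j\le p-1$. These vectors are nonzero, since $y^p$ acts by the nonzero scalar $\chi(y)^p$, and they have $e_0$-weight $\mu_0-j$, since $[e_0,y]=-y$. The main point is to check that $\mathfrak u$ still kills $y^jv$. Let $z=e_kt^i\in\mathfrak u$.
\begin{itemize}
\item If $i\ge1$, then $[z,y]\in \Wl\otimes t^{i+\ell}=0$. So $z$ commutes with $y$ and kills $y^jv$.
\item If $i=0$ and $k\ge1$, then $[e_k,y]=(-1-k)e_{k-1}t^\ell$. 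This lies in $\mathfrak u$ (for $k=1$ it is a multiple of $e_0t^\ell$), and it commutes with $y$ because its $t$-degree is $\ell>0$. Therefore
\[
e_k\,y^j v=\sum_{a+b=j-1}y^a[e_k,y]y^bv+y^je_kv=j\,y^{j-1}[e_k,y]v=0 .
\]
\end{itemize}
Thus every $\mu\in\FF_p$ occurs as the weight of a vector satisfying (i)--(iii), which completes the argument.

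I expect the only delicate point to be the bookkeeping in Step 2: one must confirm that every commutator $[\mathfrak u,y]$ lands back in $\mathfrak u\cap(\Wl\otimes t^\ell)$, which is central relative to $y$. This is exactly where the top truncation degree $t^\ell$ and the nonvanishing of $\chi(e_{-1}t^\ell)$ are used.
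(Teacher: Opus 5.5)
Your proof is correct and is essentially the paper's argument. The paper takes your starting vector to be $1\otimes v_\lambda$ and sets $w=(e_{-1}t^{\ell})^{[\lambda-\mu]}\otimes v_\lambda$, using the same commutator $[e_i,e_{-1}t^{\ell}]=-(i+1)e_{i-1}t^{\ell}$ (which lies in $t$-degree $\ell$, commutes with $e_{-1}t^\ell$, and kills $v_\lambda$), and then concludes by Frobenius reciprocity and Proposition~\ref{prop: rchi=0,zchilambda simple}. One caveat: $\ell\geq1$ and $\chi(e_{-1}t^{\ell})\neq0$ are genuine standing hypotheses, as in the paper, and cannot be reduced away. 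If $\chi(e_{-1}t^{\ell})=0$, then $\cZ_\chi(\lambda)$ is not simple, and if all $\chi(e_{-1}t^{i})$ with $i\geq1$ vanish, Lemma~\ref{Lemma: reduction by truncation degree} lands you in Theorem~\ref{Thm:Zhangherui results}(2), which has $p-1$ non-isomorphic simples.
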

\begin{proof}
For $\lambda,\mu\in\Lambda(\chi)$,
it is clear that $\lambda-\mu\in\FF_p$.
We denote by $[\lambda-\mu]\in\NN$ the minimal element such that $[\lambda-\mu]\mod p=\lambda-\mu$.
Assume that $\lambda\neq \mu$.
Consider the nonzero element $w:=(e_{-1}t^{\ell})^{[\lambda-\mu]}\otimes v_\lambda\in\cZ_\chi(\lambda)$.
For $1\leq i\leq p-2$,
we have
\begin{align}\label{formula of proof-1}
e_i.w&=e_i.(e_{-1}t^{\ell})^{[\lambda-\mu]}\otimes v_\lambda\\\nonumber
=&\left( e_{-1}t^{\ell} \right) ^{\left[ \lambda -\mu \right]}e_i\otimes v_{\lambda}
-\left( i+1 \right) \left( \lambda -\mu \right) \left( e_{-1}t^{\ell}\right) ^{\left[ \lambda -\mu \right] -1}
\left( e_{i-1}t^{\ell} \right) \otimes v_{\lambda}.   
\end{align}
Clearly, $e_i\otimes v_\lambda=0$.
Also, by assumption on $\ell\geq 1$, $e_{i-1}t^{\ell}$ is $p$-nilpotent. This yields $e_{i-1}t^{\ell}\otimes v_{\lambda}=0$.
As a result, $e_i.w=0$.
Moreover,
for $1\leq i\leq p-2$ and $1\leq j\leq \ell$,
we have
\begin{align}\label{formula of proof-2}
e_0.w =\mu w\,,\,\left( e_0t^j \right).\omega =\chi_j \omega\,,\,\left( e_it^j \right).w =0.    
\end{align}
Now the universal property of tensor product gives rise to a nonzero homomorphism
\[
\cZ_\chi(\mu)\rightarrow \cZ_\chi(\lambda);~v_\mu\mapsto w.
\]
Then Proposition \ref{prop: rchi=0,zchilambda simple} implies that the homomorphism has to be an isomorphism.
The last assertion immediately follows from Propostion \ref{prop: simpe module is image of zchilambda}.
\end{proof}

%%%%%%%%%%%%%%%%%%%%%%%%%%%%%%%%%%%%%%%%%%%%%%%%%%%%%%%%%%%%%%
\subsection{Height $1$}
In this section, we will consider the case that $r(\chi)=1$.
As before, we always assume that $\ell\geq 1$ (see Theorem \ref{Thm:Zhangherui results}(3)).
Note that $\chi(W_{\ell,(1)})=(0)$.
In fact by Lemma \ref{Lemma: reduction by truncation degree},
we can assume further that $\chi(e_{-1}t^{\ell})$ and $\chi(e_0t^{\ell})$ are not simultaneously zero.

\begin{Proposition}\label{prop: rchi=1,simple}
Let $\chi\in\Wl^*$ with $r(\chi)=1$.
Then the $\chi$-reduced Verma module $\cZ_\chi(\lambda)$ is simple for all $\lambda\in\Lambda(\chi)$.
\end{Proposition}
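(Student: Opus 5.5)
We split into two cases according to which of $\chi(e_{-1}t^{\ell})$ and $\chi(e_0t^{\ell})$ is nonzero, and in both cases follow the strategy of Proposition \ref{prop: rchi=0,zchilambda simple}. Take a nonzero submodule $N\subseteq\cZ_\chi(\lambda)$ and a nonzero $v\in N$. Using PBW monomials in a basis of $W_{\ell,[-1]}$ adapted to $\chi$, we first push $v$ to a vector of a normalised ``top'' shape. We then descend, one generator at a time, to $1\otimes v_\lambda$ by applying elements of $W_{\ell,(0)}$ minus their scalars on $v_\lambda$, together with the nonzero $p$-th powers coming from $\chi$. Throughout we assume $\ell\geq 1$, so $t^{2\ell}=0$. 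The relevant brackets are
\[
[e_0t^{k},e_{-1}t^{j}]=-e_{-1}t^{j+k},\qquad [e_1t^{k},e_{-1}t^{j}]=-2e_0t^{j+k}.
\]

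\emph{Case A: $\chi(e_{-1}t^{\ell})\neq 0$.} We use exactly the basis $x_i=e_{-1}t^i+b_ie_{-1}t^{\ell}$ ($0\leq i\leq \ell-1$) and $e_{-1}t^{\ell}$ from the proof of Proposition \ref{prop: rchi=0,zchilambda simple}.
\begin{enumerate}
\item Since $x_i^p=0$ in $U_\chi(\Wl)$, multiplying $v$ by suitable powers of the $x_i$ reduces to $v=\sum_j a_jx^\tau(e_{-1}t^\ell)^j\otimes v_\lambda$.
\item Now $\chi(e_0)$ may be nonzero, but $e_0$ still acts on $x^\tau(e_{-1}t^{\ell})^j\otimes v_\lambda$ by $\lambda-\ell(p-1)-j$. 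These are $p$ distinct scalars, so the same Vandermonde argument isolates a single monomial in $N$.
\item Because $(e_{-1}t^{\ell})^p$ acts by the nonzero scalar $\chi(e_{-1}t^\ell)^p$, we then get $x^\tau\otimes v_\lambda\in N$.
\item To lower the exponents of the $x_i$ we apply $e_0t^{k}-\chi_k$ for $k=\ell,\ell-1,\dots,1$, followed by a power of $e_{-1}t^\ell$.
\end{enumerate}
The new point in step 4, compared with height $0$, is that $[e_0t^k,x_i]$ is a combination of $e_{-1}t^{i+k}$ and $e_{-1}t^{\ell}$, and hence of $x_{i+k}$ and $e_{-1}t^\ell$. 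We will therefore order the descent so that, when $e_0t^k$ is used, only the lowest remaining variable is lowered and the other contributions lie in the span of monomials of strictly smaller total degree in the $x$'s. With that ordering a triangularity argument gives $1\otimes v_\lambda\in N$.

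\emph{Case B: $\chi(e_{-1}t^{\ell})=0$ and $\chi_\ell=\chi(e_0t^{\ell})\neq 0$.} Here $e_{-1}t^{\ell}$ acts nilpotently, so we cannot divide by its $p$-th power. Instead we exploit $e_1$:
\[
e_1(e_{-1}t^{\ell})^k\otimes v_\lambda=-2k\chi_\ell(e_{-1}t^\ell)^{k-1}\otimes v_\lambda,
\]
since $e_0t^\ell$ commutes with $e_{-1}t^\ell$ and $e_1v_\lambda=0$. The plan has three steps.
\begin{enumerate}
\item Choose a basis $y_0,\dots,y_{\ell-1}$ of a complement of $\kk e_{-1}t^\ell$ in $W_{\ell,[-1]}$. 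Include the element $e_{-1}t^{m}$ of maximal $m$ with $\chi(e_{-1}t^m)\neq0$, if such an element exists; the remaining $y$'s lie in $\Ker\chi$.
\item Push $v$ to a vector with $y$-part $y^{\tau}$, and then use $e_1$ repeatedly to kill the dependence on $e_{-1}t^{\ell}$. Applying $e_1$ to the $y$-part only produces terms $e_0t^{j}$, which act by scalars modulo lower terms. Hence, in a filtration by the power of $e_{-1}t^\ell$, $e_1$ acts as $-2\chi_\ell\,\partial/\partial(e_{-1}t^\ell)$ plus lower-order corrections. Since $k\neq 0$ for $1\leq k\leq p-1$, this yields a nonzero vector of the form $y^{\tau}\otimes v_\lambda$ in $N$.
\item Lower the exponents of the $y$'s by $e_1t^{\ell-j}$, using $[e_1t^{\ell-j},e_{-1}t^{j}]=-2e_0t^{\ell}$, which acts by $-2\chi_\ell\neq0$. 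This reaches $1\otimes v_\lambda$.
\end{enumerate}

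We expect the main obstacle to be the bookkeeping in Case B, step 2, and in the descent of Case A, step 4. In both places one must check that the commutators produce only terms that are lower in a suitable total order on PBW monomials, ordered first by $t$-degree and then by exponent, so that the leading coefficient is a nonzero multiple of $\chi_\ell$ or of $\chi(e_{-1}t^\ell)^p$. We would set up this filtration explicitly first and then verify the leading-term computations, which are the displayed ones above.
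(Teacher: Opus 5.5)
\textbf{Verdict.} Your Case A is the paper's argument and is fine, but Case B has a genuine gap: you eliminate the variables in the opposite order to the paper, and the justification you give for that order is wrong.

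\textbf{Case A.} The extra contributions you worry about in step 4 do not actually occur. Once $x_0,\dots,x_{k-1}$ have been removed, $[e_0t^{\ell-k},x_j]=-\delta_{jk}\,e_{-1}t^\ell$ for all $j\geq k$.

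\textbf{Case B, step 2.} Write $z:=e_{-1}t^\ell$. You remove $z$ first with $e_1$ and only then treat the lower variables. The terms produced by $[e_1,y]$ are \emph{not} lower in the filtration by the power of $z$. The resulting $e_0t^j$ either keeps the $z$-power, or raises it through $[e_0t^j,e_{-1}t^{\ell-j}]=-z$. Already for $\ell=1$ and $\chi(W_{\ell,[-1]})=0$ one gets
\[
e_1.\bigl(e_{-1}^{p-1}(e_{-1}t)^{p-1}\otimes v_\lambda\bigr)=2\chi(e_0t)\,e_{-1}^{p-1}(e_{-1}t)^{p-2}\otimes v_\lambda+2(\lambda+2)\,e_{-1}^{p-2}(e_{-1}t)^{p-1}\otimes v_\lambda .
\]
So $e_1^{p-1}$ does not put $y^\tau\otimes v_\lambda$ into $N$. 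It only gives $y^\tau\otimes v_\lambda$ plus terms that still contain $z$.

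\textbf{Case B, step 3.} This step has the same defect. Since $e_0t^\ell$ does not commute with $e_{-1}$,
\[
e_1t^\ell\, e_{-1}^k=e_{-1}^ke_1t^\ell-2ke_{-1}^{k-1}e_0t^\ell+k(k-1)e_{-1}^{k-2}e_{-1}t^\ell ,
\]
which reintroduces $z$ after you have removed it. Also $[e_1t^{\ell-j},e_{-1}t^i]=-2e_0t^{\ell-j+i}\neq 0$ for $i<j$, so the order inside step 3 matters, and you do not specify it. The assertion that the bracket ``acts by $-2\chi_\ell$'' and the promised triangularity are exactly the unproved content.

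\textbf{How the paper avoids this.} It works bottom-up. It starts from the top vector in which every nilpotent generator of its adapted basis appears to the power $p-1$; the non-nilpotent $e_{-1}t^m$ is removed via $e_0$-eigenvalues and $(e_{-1}t^m)^p\neq0$, a point you also leave implicit in Case B. It then lowers $e_{-1}$ (resp.\ $x_0$) with $e_1t^\ell$ \emph{while $(e_{-1}t^\ell)^{p-1}$ is still present}, so the $z$-producing term dies because $z^p=0$. Next it lowers $e_{-1}t$ (resp.\ $x_1$) with $e_1t^{\ell-1}$, and so on. At each stage every bracket with the remaining variables is either $0$ or $-2e_0t^\ell$, which commutes with them. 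Only at the very end is $e_1$ applied to $(e_{-1}t^\ell)^{p-1}\otimes v_\lambda$, where your displayed formula is exact.

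\textbf{Repairing your order.} You must stop claiming that intermediate vectors are monomials and track only the final coefficient of $1\otimes v_\lambda$.
\begin{itemize}
\item Each $e_1t^k$ lowers the PBW degree by one, so the end result lies in $\kk(1\otimes v_\lambda)$.
\item In the associated graded, $e_1t^k$ lowers the total $t$-degree by at most $\ell-k$, with equality only for the main term.
\item The sum of these maxima over your sequence equals the $t$-degree of the starting vector. Hence only the main terms contribute, and the final coefficient is a product of factors $-2a\chi_\ell\neq0$.
\end{itemize}
Either this argument or the paper's ordering must be supplied to close the gap.
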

\begin{proof}
If $\chi(e_{-1}t^{\ell})\neq 0$,
then assertion follows from a similar proof of Proposition \ref{prop: rchi=0,zchilambda simple}.

Suppose that $\chi(e_{-1}t^{\ell})=0$.
We know that $\chi(e_{0}t^{\ell})\neq 0$ by assumption.
Let $N$ be an arbitrary nonzero submodule of $\cZ_\chi(\lambda)$.
We need to show that $N=\cZ_\chi(\lambda)$.
If $\chi(W_{\ell,[-1]})\neq (0)$,
then there exists a unique integer $0\leq m\leq \ell-1$ such that $\chi(e_{-1}t^m)\neq 0$ and $\chi(e_{-1}t^i)=0$ for all $m<i\leq \ell$.
We divide the proof into three cases.

Case $1$: $\chi(W_{\ell,[-1]})=(0)$.
Take a nonzero element $u\in N$.
By using the action of the elements $e_{-1}t^i$ we can get 
\[
 e_{-1}^{p-1}(e_{-1}t)^{p-1}\cdots (e_{-1}t^{\ell})^{p-1}\otimes v_\lambda\in N.
\]
Then we have
\begin{align*}
e_1t^{\ell}.e_{-1}^{p-1}(e_{-1}t)^{p-1}\cdots(e_{-1}t^{\ell})^{p-1}\otimes & v_{\lambda}
\\
=e_{-1}^{p-1}(e_1t^{\ell})(e_{-1}t)^{p-1}\cdots(e_{-1}&t^{\ell})^{p-1}\otimes  v_{\lambda}
\\
-2(p-1) e_{-1}^{p-2}(e_0t^{\ell})&(e_{-1}t)^{p-1}\cdots( e_{-1}t^{\ell})^{p-1}\otimes v_{\lambda}
\\
+2\binom{p-1}{2} e_{-1}^{p-3}&(e_{-1}t^{\ell})(e_{-1}t)^{p-1}\cdots(e_{-1}t^{\ell})^{p-1}\otimes v_{\lambda}
\\
= -2(p-&1)\chi(e_0t^{\ell}) 
e_{-1}^{p-2}(e_{-1}t)^{p-1}\cdots(e_{-1}t^{\ell})^{p-1}\otimes v_{\lambda}.
\end{align*}
This shows that $e_{-1}^{p-2}(e_{-1}t)^{p-1}\cdots(e_{-1}t^{\ell})^{p-1}\otimes v_{\lambda}\in N$.
Repeating the above process, 
we conclude that 
\[
(e_{-1}t)^{p-1}\cdots(e_{-1}t^{\ell})^{p-1}\otimes v_{\lambda}\in N. 
\]
We next proceed with similar computation,
replacing the element $e_1t^{\ell}$ in the above process successively with $e_1t^{\ell-1}, e_1t^{\ell-2}, \dots, e_1$,
and finally obtain $1 \otimes v_\lambda \in N$, which implies $N = Z_\chi(\lambda)$.

Case $2$: $m=0$. In this case, we have $\chi(e_{-1})\neq 0$ and $\chi(e_{-1}t^i)=0$ for all $1\leq i\leq \ell$.
Using arguments similar to the proof of Proposition \ref{prop: rchi=0,zchilambda simple}, one can show
\[
(e_{-1}t)^{p-1}(e_{-1}t^2)^{p-1}\cdots(e_{-1}t^{\ell})^{p-1}\otimes v_\lambda\in N.
\]
Then by Case $1$, we also have $1\otimes v_\lambda\in N$,
so that $N=\cZ_\chi(\lambda)$.

Case $3$: $m>0$.
For $0\leq i\leq m-1$,
we set
\[
x_i:=e_{-1}t^i-\chi(e_{-1}t^i)/\chi(e_{-1}t^m)e_{-1}t^m,
\]
so that $\chi(x_i)=0$.
Moreover, the set $\{x_i,e_{-1}t^j;~0\leq i<m, m\leq j\leq \ell\}$ 
is a basis of $W_{\ell,[-1]}$ and $\cZ_\chi(\lambda)$ has basis
\[
\{x_0^{i_0}x_1^{i_1}\cdots x_{m-1}^{i_{m-1}}(e_{-1}t^{m})^{i_m}\cdots (e_{-1}t^{\ell})^{i_\ell}\otimes v_\lambda;~0\leq i_j<p~{\rm for~all}~j\}.
\]
Using a similar argument as in Proposition \ref{prop: rchi=0,zchilambda simple}, we obtain 
\[
x_0^{p-1}x_1^{p-1}\cdots x_{m-1}^{p-1}(e_{-1}t^{m+1})^{p-1}\cdots (e_{-1}t^{\ell})^{p-1}\otimes v_\lambda\in N.
\]
Then we have
\begin{align*}
e_1t^{\ell}.x_0^{p-1}x_1^{p-1}\cdots x_{m-1}^{p-1}(e_{-1}t^{m+1})^{p-1}\cdots (e_{-1}t^{\ell})^{p-1}\otimes  v_{\lambda}&\\
=x_0^{p-1}(e_1t^{\ell})x_1^{p-1}\cdots x_{m-1}^{p-1}(e_{-1}t^{m+1})^{p-1}\cdots  (e_{-1}&t^{\ell})^{p-1}\otimes v_{\lambda}\\
-2(p-1)x_0^{p-2}(e_0t^{\ell})x_1^{p-1}\cdots x_{m-1}^{p-1}(e_{-1}&t^{m+1})^{p-1}\cdots (e_{-1}t^{\ell})^{p-1}\otimes v_{\lambda}
\\
+2\binom{p-1}{2}x_0^{p-3}(e_{-1}t^{\ell})x_1^{p-1}\cdots&x_{m-1}^{p-1}(e_{-1}t^{m+1})^{p-1}\cdots (e_{-1}t^{\ell})^{p-1}\otimes v_{\lambda}\\
=-2(p-1)\chi(e_0t^{\ell})&x_0^{p-2}x_1^{p-1}\cdots x_{m-1}^{p-1}(e_{-1}t^{m+1})^{p-1}\cdots (e_{-1}t^{\ell})^{p-1}\otimes v_{\lambda}.
\end{align*}
This yields
\[
x_0^{p-2}x_1^{p-1}\cdots x_{m-1}^{p-1}(e_{-1}t^{m+1})^{p-1}\cdots (e_{-1}t^{\ell})^{p-1}\otimes v_{\lambda}\in N.
\]
Repeating the above process, 
we obtain that
\[
x_1^{p-1}\cdots x_{m-1}^{p-1}(e_{-1}t^{m+1})^{p-1}\cdots (e_{-1}t^{\ell})^{p-1}\otimes v_{\lambda}\in N.    
\]
As before,
we can proceed with a similar computation,
replacing in the above process the element $e_1t^{\ell}$ successively with $e_1t^{\ell-1}, e_1t^{\ell-2}, \dots, e_1t^{\ell-m+1}$,
and we obtain
\[
(e_{-1}t^{m+1})^{p-1}\cdots (e_{-1}t^{\ell})^{p-1}\otimes v_{\lambda}\in N.  
\]
Moreover,
a straightforward computation gives
\[
e_1t^{\ell-m-1}.(e_{-1}t^{m+1})^{p-1}\cdots (e_{-1}t^{\ell})^{p-1}\otimes  v_{\lambda}=-2(p-1)\chi(e_0t^{\ell})(e_{-1}t^{m+1})^{p-2}\cdots (e_{-1}t^{\ell})^{p-1}\otimes v_{\lambda}\in N.
\]
Similarly, we can get $1\otimes v_\lambda\in N$.
As a result, $N=\cZ_\chi(\lambda)$.
\end{proof}

We have the following classification theorems on isomorphism classes of irreducible $U_\chi(\Wl)$-modules. 
\begin{Theorem}\label{thm:height 1 iso 1}
Let $\chi\in\Wl^*$ with $r(\chi)=1$ and $\chi(e_{-1}t^{\ell})\neq 0=\chi(e_{0}t^{\ell})$. 
Then we have $\cZ_\chi(\lambda)\cong\cZ_\chi(\mu)$ for any $\lambda,\mu\in\Lambda(\chi)$.
In particular,
$U_\chi(\Wl)$ has only one simple module up to isomorphism.
\end{Theorem}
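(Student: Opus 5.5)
The plan is to follow the proof of Theorem \ref{Theorem: height 0 iso classes} almost verbatim, since the key element $e_{-1}t^{\ell}$ again has $\chi(e_{-1}t^\ell)\neq 0$. The one new feature is that $e_0t^{\ell}$ no longer lies only in $W_{\ell,(0)}$ acting trivially by degree reasons. Instead, I will use the hypothesis $\chi(e_0t^\ell)=0$ to see that $e_0t^\ell$ still kills $v_\lambda$.

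First I fix the scalars $\chi_j$. For $\ell\ge 1$ and $j\ge 1$ we have $(e_0t^j)^{[p]}=e_0t^{jp}=0$. Hence $e_0t^j$ acts on $\kk_\lambda$ by a scalar $\chi_j$ with $\chi_j^p=\chi(e_0t^j)^p$, that is, $\chi_j=\chi(e_0t^j)$. These scalars depend only on $\chi$ and not on $\lambda$. In particular $\chi_\ell=\chi(e_0t^\ell)=0$, so $e_0t^\ell\otimes v_\lambda=0$. Moreover, since $r(\chi)=1$, every $e_it^j$ with $i\ge1$ acts as $0$ on $v_\lambda$.

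Next, for $\lambda\neq\mu$ in $\Lambda(\chi)$, I note that $\lambda-\mu\in\FF_p$ and set $n:=[\lambda-\mu]$ and $w:=(e_{-1}t^\ell)^{n}\otimes v_\lambda\in\cZ_\chi(\lambda)$. This vector is nonzero by the PBW basis. I then check that $w$ is a highest weight vector of weight $\mu$ for $W_{\ell,(0)}$, as follows.
\begin{enumerate}
\item[(a)] $e_0.w=(\lambda-n)w=\mu w$, since $[e_0,e_{-1}t^\ell]=-e_{-1}t^\ell$.
\item[(b)] For $j\ge1$, $[e_0t^j,e_{-1}t^\ell]=-e_{-1}t^{\ell+j}=0$, so $e_0t^j.w=\chi_jw$.
\item[(c)] For $i\ge1$ and $0\le j\le\ell$, I expand $e_it^j(e_{-1}t^\ell)^n$. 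Each commutator with $e_{-1}t^\ell$ produces $e_{i-1}t^{\ell+j}$, which is zero unless $j=0$. In the case $j=0$, further commutators produce factors $t^{2\ell}=0$. So only the terms $(e_{-1}t^\ell)^{n}e_i\otimes v_\lambda$ and $c\,(e_{-1}t^\ell)^{n-1}e_{i-1}t^\ell\otimes v_\lambda$ survive. The first vanishes because $e_i\otimes v_\lambda=0$. The second vanishes because $e_{i-1}t^\ell\otimes v_\lambda=0$: for $i\ge2$ this holds since $r(\chi)=1$, and for $i=1$ it holds by $\chi_\ell=0$ from the previous paragraph.
\end{enumerate}

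By the universal property of induction, (a)--(c) yield a nonzero homomorphism $\cZ_\chi(\mu)\to\cZ_\chi(\lambda)$ with $v_\mu\mapsto w$. Both modules are simple by Proposition \ref{prop: rchi=1,simple}, so this homomorphism is an isomorphism. The uniqueness of the simple module then follows from Proposition \ref{prop: simpe module is image of zchilambda}.

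The main point needing care is the case $i=1$ in step (c). That is exactly where the hypothesis $\chi(e_0t^\ell)=0$ enters, and the argument fails without it; this explains why the complementary case is treated separately in Theorem \ref{thm:height 1 iso 2}.
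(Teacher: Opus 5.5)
Your proof is correct and follows the paper's argument: the paper notes that, because $\chi(e_0t^{\ell})=0$, the identities \eqref{formula of proof-1} and \eqref{formula of proof-2} from the height-$0$ case still hold. Hence $w=(e_{-1}t^{\ell})^{[\lambda-\mu]}\otimes v_\lambda$ gives a nonzero map $\cZ_\chi(\mu)\to\cZ_\chi(\lambda)$, which is an isomorphism by simplicity.

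One harmless slip: $(e_0t^j)^{[p]}=e_0t^{jp}$ need not vanish when $jp\le\ell$, which can happen if $\ell\ge p$. So in general $\chi_j$ is not $\chi(e_0t^j)$ but is determined recursively by $\chi_j^p=\chi(e_0t^j)^p+\chi_{jp}$, with $\chi_{jp}:=0$ for $jp>\ell$. Your argument only uses that the $\chi_j$ do not depend on $\lambda$ and that $\chi_\ell=\chi(e_0t^\ell)$, which holds since $\ell p>\ell$; both are true, so the conclusion stands.
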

\begin{proof}
For $\lambda,\mu\in\Lambda(\chi)$, we have by \eqref{Lambda chi} $\lambda-\mu\in\FF_p$.
Note that $\chi(e_0t^{\ell})=0$.
It follows that \eqref{formula of proof-1} and \eqref{formula of proof-2}
still hold true.
Then we can use the same argument as in the proof of Theorem \ref{Theorem: height 0 iso classes}.
\end{proof}

\begin{Theorem}\label{thm:height 1 iso 2}
Let $\chi\in\Wl^*$ with $r(\chi)=1$ and $\chi(e_{0}t^{\ell})\neq 0$.
Then the set $\{\cZ_\chi(\lambda);~\lambda\in\Lambda(\chi)\}$ exhausts all non-isomorphic simple $U_\chi(\Wl)$-modules.
\end{Theorem}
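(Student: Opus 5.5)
By Proposition \ref{prop: simpe module is image of zchilambda} together with Proposition \ref{prop: rchi=1,simple}, every simple $U_\chi(\Wl)$-module is isomorphic to some $\cZ_\chi(\lambda)$ with $\lambda\in\Lambda(\chi)$. So the set $\{\cZ_\chi(\lambda);~\lambda\in\Lambda(\chi)\}$ exhausts the simple modules, and what remains is to show that $\cZ_\chi(\lambda)\not\cong\cZ_\chi(\mu)$ for $\lambda\neq\mu$ in $\Lambda(\chi)$. My plan is to distinguish them by the space of vectors annihilated by $W_{\ell,(1)}$, on which the element $e_0t^\ell$ acts by a scalar that depends on the weight $\lambda$.

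\emph{Step 1.} I would compute the invariants
\[
\cZ_\chi(\lambda)^{W_{\ell,(1)}}:=\{v\in\cZ_\chi(\lambda);~x.v=0~\forall x\in W_{\ell,(1)}\}
\]
and show that this space equals $\kk(1\otimes v_\lambda)$. The vector $1\otimes v_\lambda$ clearly lies in it. For the converse I would rerun the proof of Proposition \ref{prop: rchi=1,simple}. That proof lowers the PBW monomials in the $e_{-1}t^j$ (or in the modified $x_i$) by acting with the elements $e_1t^k$, and each such lowering step produces a leading coefficient that is a nonzero multiple of $\chi(e_0t^\ell)$. Now take $v$ to be a nonzero $W_{\ell,(1)}$-invariant and order the monomials appearing in $v$ by total degree. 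Acting by the appropriate $e_1t^{\ell-j}$ and extracting the top-degree term gives a nonzero result whenever $v$ has a component of positive degree. This contradicts the invariance of $v$. Hence $v$ has degree zero, that is, $v$ is a multiple of $1\otimes v_\lambda$.

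\emph{Step 2.} On $1\otimes v_\lambda$ the element $e_0$ acts by $\lambda$. An isomorphism $\cZ_\chi(\lambda)\cong\cZ_\chi(\mu)$ would map the invariant line of the first module onto the invariant line of the second, since both are defined intrinsically from the module structure. Comparing the $e_0$-eigenvalues on these lines then gives $\lambda=\mu$.

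The main obstacle is Step 1. The lowering computation in Proposition \ref{prop: rchi=1,simple} was carried out only on extremal monomials, where every exponent is $p-1$. A general invariant $v$ is a linear combination of monomials, and I must check that the top-degree component cannot cancel after acting by $e_1t^k$. I would handle this by a filtration argument: grade $\cZ_\chi(\lambda)$ by total $e_{-1}$-degree and work in the associated graded module. There $e_1t^k$ acts through its leading term $(e_0t^{k+j})$-contractions, which reduce to derivations along the variables $e_{-1}t^{j}$ with coefficient $\chi(e_0t^\ell)$ when $k+j=\ell$. The joint kernel of these derivations is the constants, because $p>3$ and the exponents are less than $p$. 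When $\chi(e_{-1}t^m)\neq0$ the $e_{-1}$-powers are not nilpotent, which spoils the grading, so in that case I would first replace the $e_{-1}t^j$ by the $x_i$ from Case 3 of the proof of Proposition \ref{prop: rchi=1,simple} to keep $x_i^p=0$, and grade by the degree in the $x_i$ only.

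If Step 1 proves too delicate, a fallback is to compare central characters. The element $e_0t^\ell$ is central modulo $W_{\ell,(1)}$-contributions, so I would look instead for a Casimir-type central element of $U_\chi(\Wl)$ built from $e_0t^\ell$, $e_0$ and $e_{-1}t^\ell e_1$. Its value on $\cZ_\chi(\lambda)$ should be a polynomial in $\lambda$ with leading coefficient a nonzero multiple of $\chi(e_0t^\ell)$, and this would separate the distinct $\lambda\in\Lambda(\chi)$.
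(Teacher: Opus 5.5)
Your overall strategy is the paper's. The paper takes $w=\varphi(1\otimes v_\lambda)$ and uses only that $w$ is killed by every $e_1t^i$. It shows $w\in\kk(1\otimes v_\mu)$ and then compares $e_0$-eigenvalues, exactly as in your Steps 1--2. The gap is in how you propose to prove Step 1.

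With respect to total degree in the $e_{-1}t^j$, the leading part of $e_1t^k$ is not a derivation. Already in
\[
(e_1t^\ell)e_{-1}^k=e_{-1}^k(e_1t^\ell)-2ke_{-1}^{k-1}(e_0t^\ell)+k(k-1)e_{-1}^{k-2}(e_{-1}t^\ell)
\]
the last two terms both have total degree $k-1$. Writing $y_j$ for $e_{-1}t^j$, the associated graded action of $e_1t^\ell$ is therefore $-2\chi(e_0t^\ell)\partial_{y_0}+y_\ell\partial_{y_0}^2$. In general, commuting $e_0t^{k+j}$ past the remaining factors (via $[e_0t^{k+j},e_{-1}t^{j'}]=-e_{-1}t^{k+j+j'}$) produces second-order terms $y_{k+j+j'}\partial_{y_j}\partial_{y_{j'}}$ in the graded action of $e_1t^k$. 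These have the same degree as the contraction terms $\chi_{k+j}\partial_{y_j}$. So your claim that the graded operators are derivations whose joint kernel is the constants does not hold. The cancellation you flagged as the main obstacle is thus not ruled out by your argument.

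The repair is the paper's finer ordering. Work with the exponent of $e_{-1}$ alone and act only by $e_1t^\ell$. Since $e_1t^\ell$ and $e_0t^\ell$ commute with every $e_{-1}t^j$, $j\geq 1$, one gets exactly
\[
e_1t^\ell.e_{-1}^kZ\otimes v_\mu=-2k\chi(e_0t^\ell)e_{-1}^{k-1}Z\otimes v_\mu+k(k-1)e_{-1}^{k-2}(e_{-1}t^\ell)Z\otimes v_\mu
\]
for $Z$ a monomial in the $e_{-1}t^j$, $j\geq1$. The second term lowers the $e_{-1}$-exponent by two. A descending induction from $k=p-1$, using $2k\not\equiv 0\pmod p$, then kills every coefficient with positive $e_{-1}$-exponent. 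On the span of $e_{-1}$-free monomials, $e_1t^{\ell-1}$ acts exactly as $-2\chi(e_0t^\ell)\,\partial/\partial(e_{-1}t)$, because $[e_1t^{\ell-1},e_{-1}t^j]=0$ for $j\geq2$. The same pattern continues down to $e_1$.

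Your worry about $\chi(e_{-1}t^m)\neq 0$ is unfounded. In the PBW basis with exponents in $\{0,\dots,p-1\}$, an exponent reaching $p$ is replaced by the scalar $\chi(e_{-1}t^j)^p$, which only lowers degree. So the argument runs in the plain $e_{-1}t^j$ basis for every $\chi$, as in the paper. Passing to the $x_i$ would only make $[e_1t^k,x_i]$ messier.

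Two smaller slips:
\begin{itemize}
\item $e_0t^\ell$ acts on $1\otimes v_\lambda$ by $\chi(e_0t^\ell)$ for every $\lambda$. It is $e_0$, not $e_0t^\ell$, that separates the weights, as your Step 2 correctly uses.
\item In the fallback, $e_0t^\ell$ is not central modulo $W_{\ell,(1)}$, since $[e_0t^\ell,e_{-1}]=-e_{-1}t^\ell$.
\end{itemize}
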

\begin{proof}
According to Proposition \ref{prop: simpe module is image of zchilambda} and Proposition \ref{prop: rchi=1,simple},
we need to show that $\cZ_\chi(\lambda)\cong\cZ_\chi(\mu)$ if and only if $\lambda=\mu$.
Let $\varphi: \cZ_\chi(\lambda)\rightarrow\cZ_\chi(\mu)$ be a $\Wl$-module isomorphism.
Set $I_p:=\{0,1,\dots,p-1\}\subseteq\ZZ$.
We can write
\[
w:=\varphi(1\otimes v_\lambda)=\sum\limits_{k=0}^{p-1}\sum\limits_{I=(i_1,\dots,i_\ell)\in I_p^{\ell}}a_{k,I}e_{-1}^k(e_{-1}t)^{i_1}\cdots (e_{-1}t^{\ell})^{i_\ell}\otimes v_\mu.
\]
Since $1\otimes v_\lambda$ is a vector annihilated by all $e_1t^i$,
so is $w$.
For the following computations observe that
\[
(e_1t^{\ell})e_{-1}^k=e_{-1}^k(e_1t^{\ell})-2ke_{-1}^{k-1}(e_0t^{\ell})+k(k-1)e_{-1}^{k-2}(e_{-1}t^{\ell})
\]
for $0\leq k\leq p-1$.
Then 
\begin{align}\label{jisuanzhong-1}\nonumber
0=e_1t^{\ell}.w= -2\sum\limits_{k=0}^{p-1}&\sum\limits_{I=( i_1,\dots ,i_\ell) in I_p^{\ell}}a_{k,I}\chi( e_0t^{\ell}) ke_{-1}^{k-1}(e_{-1}t)^{i_1}\cdots (e_{-1}t^{\ell})^{i_\ell}\otimes v_{\mu}\\
&+\sum_{k=0}^{p-1}\sum\limits_{I=i_1,\dots ,i_\ell) \in I_p^{\ell}}a_{k,I}k(k-1) e_{-1}^{k-2}(e_{-1}t)^{i_1}\cdots (e_{-1}t^{\ell})^{i_\ell+1}\otimes v_{\mu}.
\end{align}
The terms containing $e_{-1}^{p-2}$ in \eqref{jisuanzhong-1} are
\[
-2(p-1)\chi(e_0t^{\ell})\sum\limits_{I=(i_1,\dots,i_\ell)\in I_p^{\ell}}a_{p-1,I}e_{-1}^{p-2}(e_{-1}t)^{i_1}\cdots (e_{-1}t^{\ell})^{i_\ell}\otimes v_{\mu},
\]
which is zero.
This implies that $a_{p-1,I}=0$ for all $I\in I_p^\ell$.
We continue to examine the terms involving $e_{-1}^{p-3}$ in \eqref{jisuanzhong-1} with the use of $a_{p-1,I}=0$.
This gives
\begin{align*}
-2(p-2)\chi(e_0t^{\ell})\sum\limits_{I=(i_1,\dots,i_\ell)\in I_p^\ell}a_{p-2,I} &e_{-1}^{p-3}(e_{-1}t)^{i_1}\cdots(e_{-1}t^{\ell})^{i_\ell}\otimes v_{\mu}
\\
+(p-1)(p-2)\sum\limits_{I=(i_1,\dots,i_\ell)\in I_p^\ell}&a_{p-1,I} e_{-1}^{p-3}(e_{-1}t)^{i_1}\cdots (e_{-1}t^{\ell})^{i_\ell+1}\otimes v_{\mu}
\\
=-2(p-2)\chi(e_0t^{\ell})\sum\limits_{I=(i_1,\dots,i_\ell)\in I_p^\ell}a_{p-2,I} &e_{-1}^{p-3}(e_{-1}t)^{i_1}\cdots(e_{-1}t^{\ell})^{i_\ell}\otimes v_{\mu}=0,
\end{align*}
which implies that $a_{p-2,I}=0$ for all $I\in I_p^\ell$.
Proceeding in this manner and successively examining the terms involving $e_{-1}^{p-4},\cdots, e_{-1}^0$ in \eqref{jisuanzhong-1}, we conclude that $a_{k,I}=0$ for all $k\geq 1$ and $I\in I_p^{\ell}$.
Thus, we may rewrite $w$ as follows:
\[w=\sum_{k=0}^{p-1}\sum_{I=(i_2,\dots,i_\ell)\in I_p^{\ell-1}}
b_{k,I}(e_{-1}t)^k(e_{-1}t^2)^{i_2}\cdots (e_{-1}t^{\ell})^{i_\ell}\otimes v_{\mu},
\]
where $b_{k,I}\in\kk$ are not all zero.
Using the fact that $e_1t^{\ell-1}.w=0$ and the following identity
\[
e_1t^{\ell-1}(e_{-1}t)^k=(e_{-1}t)^k e_1t^{\ell-1}-2k(e_{-1}t)^{k-1}e_0t^\ell\,\,,\,\,0\leq k\leq p-1, 
\]
we obtain
\[
0=e_1t^{\ell-1}.w=-2\chi(e_0t^\ell)\sum_{k=0}^{p-1}\sum_{I=(i_2,\dots,i_\ell)\in I_p^{\ell-1}}
b_{k,I}k(e_{-1}t)^{k-1}(e_{-1}t^2)^{i_2}\cdots (e_{-1}t^\ell)^{i_\ell}\otimes v_{\mu}.
\]
This implies that $b_{k,I}=0$ for all $k\geq 1$ and $I\in I_{p}^{\ell-1}$.
Proceeding in this way, we finally get $w=c(1\otimes v_\mu)$ with $c\ne 0$.
Then we have
\[
 \mu w=e_0.w=e_0. \varphi(1\otimes v_\lambda)=\lambda w,  
\]
forcing that $\lambda=\mu$.
We complete the proof.
\end{proof}
%%%%%%%%%%%%%%%%%%%%%%%%%%%%%%%%%%%%%%%%%%%%%%%%%%%%%%%%%%%%%%
\section{Irreducible representations of $\Wl$ with $p$-characters of height greater than one}\label{section name-3}
In this section,
we study a class of irreducible representations of $U_\chi(\Wl)$ with $r(\chi)>1$.
More precisely,
we assume that $\chi(W_{\ell,(r)})=(0)$ and $\chi(e_{r-1}t^\ell)\neq 0$.
Clearly, $r(\chi)=r$ and we define
$s:=\lfloor r/2\rfloor$ the largest integer no more than $r/2$.
\begin{Lemma}\label{strade's condition}
Let $\chi\in\Wl^*$ with $1<r<p-1$ and $\chi(e_{r-1}t^\ell)\neq 0$.
For each $0\leq k\leq s$, 
there exists $y_{k,0},y_{k,1},\dots,y_{k,\ell}\in W_{\ell,(r-k)}$ such that the
following holds.
\begin{enumerate}
    \item[(1)] $\chi([e_{k-1}t^i,y_{k,j}])\neq 0$ if $i=j$, and $\chi([e_{k-1}t^i,y_{k,j}])=0$ if $i\neq j$,
    \item[(2)] $[e_{k-1}t^i,y_{k,j}]$, $[[e_{k-1}t^i,y_{k,j}],e_{k-1}t^l]\in W_{\ell,(k)}$ for all $i,j,l$,
    \item[(3)] $\chi([W_{\ell,(k)},W_{\ell,(r-k)}]+[\mathfrak{r},\mathfrak{r}])=(0)$, where $\mathfrak{r}$ is the Lie subalgebra generated by\\
    $$W_{\ell,(r-k)}+\sum_{i,j=0}^\ell \kk[e_{k-1}t^i,y_{k,j}].$$
\end{enumerate}
\end{Lemma}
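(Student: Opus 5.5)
The plan is to write down the elements $y_{k,j}$ explicitly as linear combinations of the basis vectors $e_{r-k}t^q$ ($0\leq q\leq \ell$) of $W_{\ell,[r-k]}\subseteq W_{\ell,(r-k)}$. Conditions (2) and (3) then reduce to degree bookkeeping in the $\ZZ$-grading of $\Wl$, and condition (1) reduces to inverting a triangular matrix.

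\emph{Condition (1).} Fix $0\leq k\leq s$. For $0\leq i,q\leq \ell$ we have
\[
[e_{k-1}t^i,e_{r-k}t^q]=(r-2k+1)\,e_{r-1}t^{i+q},
\]
where $t^{i+q}=0$ if $i+q>\ell$. Consider the $(\ell+1)\times(\ell+1)$ matrix $A=(A_{i,j})$ with
\[
A_{i,j}:=\chi([e_{k-1}t^i,e_{r-k}t^{\ell-j}])=(r-2k+1)\chi(e_{r-1}t^{\ell+i-j}).
\]
\begin{itemize}
\item If $i>j$, then $\ell+i-j>\ell$, so $A_{i,j}=0$ and $A$ is triangular.
\item The diagonal entries are $(r-2k+1)\chi(e_{r-1}t^\ell)$.
\item Since $0\leq k\leq r/2$, we get $1\leq r-2k+1\leq r+1<p$, so $r-2k+1\neq 0$ in $\kk$.
\end{itemize}
Together with $\chi(e_{r-1}t^\ell)\neq0$, this shows $A$ is invertible. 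Put $B=A^{-1}$ and define
\[
y_{k,j}:=\sum_{q=0}^{\ell}B_{q,j}\,e_{r-k}t^{\ell-q}\in W_{\ell,(r-k)}.
\]
Then $\chi([e_{k-1}t^i,y_{k,j}])=(AB)_{i,j}=\delta_{i,j}$, which is exactly (1).

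\emph{Condition (2).} Each $[e_{k-1}t^i,y_{k,j}]$ lies in $\sum_m\kk e_{r-1}t^m\subseteq W_{\ell,(r-1)}$. This is contained in $W_{\ell,(k)}$ because $k\leq r/2\leq r-1$ when $r\geq2$. Bracketing once more with $e_{k-1}t^l$ lands in $W_{\ell,[r+k-2]}$ (or gives $0$ if $r+k-2>p-2$). Since $r\geq 2$ we have $r+k-2\geq k$, so this lies in $W_{\ell,(k)}$ as well.

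\emph{Condition (3).} First, $[W_{\ell,(k)},W_{\ell,(r-k)}]\subseteq W_{\ell,(r)}$, on which $\chi$ vanishes by hypothesis. For $\mathfrak r$, set $a:=r-k$ if $k\geq1$ and $a:=r-1$ if $k=0$. The generating space $W_{\ell,(r-k)}+\sum\kk[e_{k-1}t^i,y_{k,j}]$ lies in $W_{\ell,(r-k)}+W_{\ell,(r-1)}=W_{\ell,(a)}$. Since $W_{\ell,(a)}$ is a subalgebra, $\mathfrak r\subseteq W_{\ell,(a)}$.

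The key observation is that $[e_at^i,e_at^j]=0$, because the bracket $[e_a,e_b]$ has coefficient $b-a$. Hence
\[
[W_{\ell,(a)},W_{\ell,(a)}]\subseteq W_{\ell,(2a+1)}.
\]
It remains to check $2a+1\geq r$ in each case:
\begin{itemize}
\item If $k\geq1$, then $a=r-k\geq r-s\geq r/2$, so $2a+1\geq r+1$.
\item If $k=0$, then $2a+1=2r-1\geq r$.
\end{itemize}
Thus $[\mathfrak r,\mathfrak r]\subseteq W_{\ell,(r)}$ and $\chi$ vanishes on it, giving (3).

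The only delicate points are the nonvanishing of $r-2k+1$ modulo $p$, which uses $r<p-1$ and $k\leq s$, and the degree bound $2a+1\geq r$ in (3). I expect the main care to be needed in (3): without using $[e_a,e_a]=0$ one would only get $[\mathfrak r,\mathfrak r]\subseteq W_{\ell,(2a)}$, which is not enough when $r$ is odd and $k=s$.
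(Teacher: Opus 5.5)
Your proposal is correct and follows essentially the paper's route. The $y_{k,j}$ are combinations of the $e_{r-k}t^{q}$, and (1) comes from the triangularity of $\bigl(\chi([e_{k-1}t^i,e_{r-k}t^{\ell-j}])\bigr)_{i,j}$; the paper inverts this matrix by an explicit recursion with the $b_i$ rather than abstractly, and leaves implicit that $r-2k+1\neq 0$ in $\kk$. Conditions (2) and (3) are degree bookkeeping in both. One correction to your closing remark: the crude bound $[\mathfrak r,\mathfrak r]\subseteq W_{\ell,(2a)}$ already suffices, since $2a=2(r-k)\geq 2r-2s\geq r$ for $1\leq k\leq s$ (e.g.\ $2a=r+1$ when $r=2s+1$, $k=s$) and $2a=2r-2\geq r$ for $k=0$, so $[e_a,e_a]=0$ is not needed.
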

\begin{proof}
Fix $0\leq k\leq s$.
For $0\leq i\leq \ell-1$,
set $b_i:=-\chi(e_{r-1}t^i)/\chi(e_{r-1}t^\ell)$.
Now, we define inductively the elements $y_{k,j}\in W_{\ell,(r-k)}$ for $j=0,1,\dots,\ell$ via
\[
 y_{k,j}:=e_{r-k}t^{\ell-j}+\sum\limits_{i=0}^{j-1}b_{\ell-j+i}y_{k,i}.
\]
Observe that $r\geq 2$ and $r-1\geq k$.
Then
\[
[e_{k-1}t^i,y_{k,j}]\in W_{\ell,(r-1)}\subseteq W_{\ell,(k)}
\]
and
\[ 
[[e_{k-1}t^i,y_{k,j}],e_{k-1}t^l]\in W_{\ell,(r-2+k)} \subseteq W_{\ell,(k)}, 
\]
so that the condition (2) is satisfied.

Moreover,
we have
\[
[W_{\ell,(k)},W_{\ell,(r-k)}]+[\mathfrak{r},\mathfrak{r}]\subseteq W_{\ell,(r)}+[W_{\ell,(r-k)}+W_{\ell,(r-1)},W_{\ell,(r-k)}+W_{\ell,(r-1)}]\subseteq W_{\ell,(r)}    
\]
It follows that $\chi([W_{\ell,(k)},W_{\ell,(r-k)}]+[\mathfrak{r},\mathfrak{r}])=(0)$.
This gives condition (3).

For condition (1), it is easy to check that 
\begin{align}\label{comp}
\chi([e_{k-1}t^i,y_{k,i}])=\chi([e_{k-1}t^i,e_{r-k}t^{\ell-i}])=(r+1-2k)\chi(e_{r-1}t^\ell)\neq 0       
\end{align}
for all $0\leq i\leq \ell$ and $[e_{k-1}t^i,y_{k,j}]=0$ for all $0\leq j<i\leq \ell$.
We shall prove by induction on $j$ that $\chi([e_{k-1}t^i,y_{k,j}])=0$ for all $0\leq i<j\leq \ell$. 
One computes by using \eqref{comp} that
\begin{align*}
\chi([e_{k-1}t^i,y_{k,i+1}])=&\chi([e_{k-1}t^i,e_{r-k}t^{\ell-i-1}+\sum\limits_{l=0}^{i}b_{\ell-i-1+l}y_{k,l}])\\
=&\chi([e_{k-1}t^i,e_{r-k}t^{\ell-i-1}+b_{\ell-1}y_{k,i}])\\
=&\chi([e_{k-1}t^i,e_{r-k}t^{\ell-i-1}])+b_{\ell-1}(r+1-2k)\chi(e_{r-1}t^{\ell})\\
=&(r+1-2k)\left(\chi(e_{r-1}t^{\ell-1})+b_{\ell-1}\chi(e_{r-1}t^{\ell})\right)=0.
\end{align*}
Suppose that $\chi([e_{k-1}t^i,y_{k,j}])=0$ for all $i+1\leq j\leq m$.
Then
\begin{align*}
\chi([e_{k-1}t^i,y_{k,m+1}])=&\chi([e_{k-1}t^i,e_{r-k}t^{\ell-m-1}+\sum\limits_{l=0}^{m}b_{\ell-m-1+l}y_{k,l}])\\
=&\chi([e_{k-1}t^i,e_{r-k}t^{\ell-m-1}+b_{\ell-m-1+i}y_{k,i}])\\
=&\chi([e_{k-1}t^i,e_{r-k}t^{\ell-m-1}])+b_{\ell-m-1+i}(r+1-2k)\chi(e_{r-1}t^{\ell})\\
=&(r+1-2k)\left(\chi(e_{r-1}t^{\ell-m-1+i})+b_{\ell-m-1+i}\chi(e_{r-1}t^{\ell})\right)=0.
\end{align*}
We have now verified that the condition (1) holds.
\end{proof}

\begin{Theorem}\label{thm:height greater than one}
Let $\chi\in\Wl^*$ with $1<r=r(\chi)<p-1$ and $\chi(e_{r-1}t^{\ell})\neq 0$.
Then up to isomorphism there is a unique simple $U_\chi(\Wl)$-module $S$ represented by 
\begin{align}\label{higher height simple module}
 U_\chi(W_\ell)\otimes_{U_{\chi}(W_{\ell,(s)})}\kk_\chi,   
\end{align}
where $s=\lfloor r/2\rfloor$ and $\kk_\chi$ is the one-dimensional simple $U_\chi(W_{\ell,(s)})$-module.
In particular, $\dim_\kk S=p^{(\ell+1)(s+1)}$.
\end{Theorem}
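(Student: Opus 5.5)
The plan is to apply Strade's main theorem from \cite{Strade77}, in the form used for the Witt algebra, but along the filtration $\Wl=W_{\ell,(-1)}\supseteq W_{\ell,(0)}\supseteq\cdots$. Lemma \ref{strade's condition} was set up precisely to verify its hypotheses. The proof runs by descending induction on $k=s,s-1,\dots,0$ through the chain
\[
W_{\ell,(s)}\subseteq W_{\ell,(s-1)}\subseteq\cdots\subseteq W_{\ell,(0)}\subseteq W_{\ell,(-1)}=\Wl .
\]

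The first step is the bottom of the chain. Since $2s\geq r-1$, we have $[W_{\ell,(s)},W_{\ell,(s)}]\subseteq W_{\ell,(2s)}$. In fact $\chi$ vanishes on this bracket: for $r$ even, $2s=r$; for $r$ odd, $2s=r-1$, and then $[e_st^a,e_st^b]=0$ in degree $r-1$. Hence $\chi$ defines a one-dimensional module $\kk_\chi$ for $U_\chi(W_{\ell,(s)})$. Moreover $W_{\ell,(s)}\subseteq W_{\ell,(1)}$ when $s\geq 1$, and $W_{\ell,(1)}$ acts $p$-nilpotently in the sense of the unipotent radical. Therefore $\kk_\chi$ is the unique simple $U_\chi(W_{\ell,(s)})$-module, and it has dimension one.

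For the inductive step, take a simple $U_\chi(W_{\ell,(k)})$-module $M_k$ with $k\geq 1$, on which $W_{\ell,(r-k)}$ acts by scalars. Consider the induced module $U_\chi(W_{\ell,(k-1)})\otimes_{U_\chi(W_{\ell,(k)})}M_k$. Its PBW basis is built from the monomials $\prod_i(e_{k-1}t^i)^{a_i}$ with $0\leq a_i<p$. We use the elements $y_{k,j}\in W_{\ell,(r-k)}$ from Lemma \ref{strade's condition}. By condition (3), the operators $y_{k,j}-\chi$-scalar act on the induced module in a controlled way. Condition (2) gives the commutator formula, since $[y_{k,j},e_{k-1}t^i]$ lies in $W_{\ell,(k)}$ and its action on $M_k$ is the scalar $-\chi([e_{k-1}t^i,y_{k,j}])$ up to lower terms. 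Condition (1) makes the pairing between the $e_{k-1}t^i$ and the $y_{k,j}$ nondegenerate diagonal.

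From this we show that $y_{k,j}$ lowers the $e_{k-1}t^j$-exponent by one with a nonzero coefficient and does not raise the others. Iterating, any nonzero submodule contains $1\otimes M_k$, so the induced module is simple. This is exactly the mechanism of the proofs of Propositions \ref{prop: rchi=0,zchilambda simple} and \ref{prop: rchi=1,simple}, and it is Strade's criterion. Transitivity of induction then gives $S:=U_\chi(\Wl)\otimes_{U_\chi(W_{\ell,(s)})}\kk_\chi$ simple. The stages $k=s,\dots,1$ handle the $W_{\ell,(k-1)}$ for $k-1\geq 0$, and the last stage $k=0$ handles $e_{-1}t^i$ with $y_{0,j}\in W_{\ell,(r)}$. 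Here Strade's theorem again applies via the conditions of the Lemma for $k=0$.

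For uniqueness, any simple $U_\chi(\Wl)$-module contains a simple $U_\chi(W_{\ell,(s)})$-submodule, which must be $\kk_\chi$. Frobenius reciprocity then gives a nonzero map $S\to M$, which is an isomorphism because $S$ is simple. The dimension count is $\dim\Wl/W_{\ell,(s)}=(\ell+1)(s+1)$, since the degrees run over $-1,\dots,s-1$ and there are $\ell+1$ copies of each. Hence $\dim S=p^{(\ell+1)(s+1)}$.

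The main obstacle is the inductive simplicity step. There one must carefully check that the error terms arising from $[y_{k,j},e_{k-1}t^i]\in W_{\ell,(k)}$ do not destroy the triangularity. This is where conditions (2) and (3) are needed, and the cleanest route is to quote Strade's main theorem verbatim rather than recompute.
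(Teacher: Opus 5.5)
Your proposal is correct and takes essentially the same route as the paper. You descend along $W_{\ell,(s)}\subseteq\cdots\subseteq W_{\ell,(-1)}$, invoke Strade's main theorem at each stage $0\le k\le s$ with its hypotheses supplied by Lemma \ref{strade's condition}, and conclude that the unique simple module is induced from the unique one-dimensional $U_\chi(W_{\ell,(s)})$-module, with the same dimension count. Your justification of $\kk_\chi$ via $\chi([W_{\ell,(s)},W_{\ell,(s)}])=(0)$ and the unipotence of $W_{\ell,(s)}$ is in fact the accurate one: the paper's phrase ``$\chi(W_{\ell,(s)})=(0)$'' is a misstatement, since $e_{r-1}t^\ell\in W_{\ell,(s)}$.
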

\begin{proof}
Since $\chi(W_{\ell,(s)})=(0)$,
there exists a unique simple $U_{\chi}(W_{\ell,(s)})$-module $\kk_\chi$ which is completely determined by $\chi$.
For each $0\leq k\leq s$, we see that $W_{\ell,(r-k)}$ is an ideal of $W_{\ell,(k)}$ and $W_{\ell,(k-1)}$ and $W_{\ell,(k-1)}=W_{\ell,(k)}+\sum_{i=0}^{\ell}\kk e_{k-1}t^i$.
Let $M_{k-1}$ be a simple $U_\chi(W_{\ell,(k-1)})$ and $M_k\subseteq M_{k-1}$ be an irreducible $W_{\ell,(k)}$-submodule.
Thanks to Lemma \ref{strade's condition},
we may apply the Main Theorem of \cite{Strade77} to see that there is $W_{\ell,(k-1)}$-module isomorphism
\begin{align}\label{Mk-1 iso Mk}
M_{k-1}\cong U_\chi(W_{\ell,(k-1)})\otimes_{U_\chi(W_{\ell,(k)})}M_k.   
\end{align}
To obtain the simple $U_{\chi}(W_{\ell})$-module \eqref{higher height simple module},
we start with the unique simple $U_{\chi}(W_{\ell,(s)})$-module $\kk_\chi$.
The observation above provides a sequence
\[
\kk_\chi=M_s\subseteq M_{s-1}\subseteq\cdots \subseteq M_0\subseteq M_{-1}
\]
such that \eqref{Mk-1 iso Mk} holds for all $0\leq k\leq s$.
In particular,  $M_{-1}\cong U_\chi(W_\ell)\otimes_{U_{\chi}(W_{\ell,(s)})}\kk_\chi$
is the unique simple $U_\chi(\Wl)$-module.
\end{proof}

\noindent
\textbf{Acknowledgment.}
This work is supported by
the Natural Science Foundation of Hubei Province (No. 2025AFB716).
%%%%%%%%%%%%%%%%%%%%%%%%%%%%%%%%%%%%%%%%%%%%%%%%%%%%%%%%%%%%%%
%%%%%%%%%%%%%%%%%%%%%%%%%%%%%%%%%%%%%%%%%%%%%%%%%%%%%%%%%%%%%%
%%%%%%%%%%%%%%%%%%%%%%%%%%%%%%%%%%%%%%%%%%%%%%%%%%%%%%%%%%%%%%

\end{document}